\documentclass[11pt]{amsart}

\usepackage{amsmath}
\usepackage{url}
\usepackage{amsthm}
\usepackage{amsfonts}
\usepackage{hyperref}
\usepackage{accents}
\usepackage{xcolor}

\newtheorem{theorem}{Theorem}[section]
\newtheorem{lemma}[theorem]{Lemma}
\newtheorem{proposition}[theorem]{Proposition}
\newtheorem{corollary}[theorem]{Corollary}

\newtheorem{prop}{Proposition}[section]
\newtheorem{remark}[prop]{Remark}
\newtheorem{definition}[prop]{Definition}

\makeatletter \@addtoreset{equation}{section} \makeatother

\def\<{\langle}
\def\>{\rangle}

\begin{document}

\title[]{On the Morse Index with Constraints I: An Abstract Formulation}

\author{Hung Tran}
\address{Department of Mathematics and Statistics,
	Texas Tech University,
	Lubbock, TX 79409}

\author{Detang Zhou}
\address{Instituto de Matem\'atica, UFF, Rua Professor Marcos Waldemar de Freitas Reis, Bloco H - Campus do Gragoat\'a, S\~ao Domingos, 24.210-201, Niter\'oi, RJ - BRAZIL}
\thanks{The first author was partially supported by a Simons Foundation Collaboration Grant}
\thanks{The second author was partially  supported by Faperj and CNPq of Brazil.}
\date{}

\maketitle
\begin{abstract} In this sequence, we first prove an abstract Morse index theorem in a Hilbert space modeling a variational problem with constraints. Then, our abstract formulation is applied to study several optimization setups including closed CMC hypersurfaces, capillary surfaces in a ball, and critical points of type-II partitioning. In this paper, we study the index and nullity of a symmetric bounded bilinear form in a Hilbert space. The main results determine precisely how these notions change when restricting to a subspace of a finite codimension.    
	
\end{abstract}

\section{Introduction}

Variational problems in geometry, a main theme in mathematics, consider certain functionals on geometrical subjects, such as a hypersurface or a manifold. From the viewpoint of Morse theory, it is essential to study  the second variation at critical points of a functional which normally involves a symmetric bilinear form in a suitable function space.  One is interested in determining quantitatively how negative the bilinear form could be, leading to the notion of the Morse index. The pioneered classical Morse index investigation has been done by H. Edwards \cite{edwards64}, S. Smale \cite{smale65}, J. Simons \cite{Simons68}, K. Uhlenbeck \cite{uhlenberg73} and others. Recently, it plays a crucial role in the resolution of the Willmore conjecture by F. Marques and A. Neves \cite{MN_minmax14}. \\ 

The index of a bilinear form in a vector space is defined to be the maximum dimension of a negative definite subspace with respect to the form.  Intuitively, it gives the number of distinct deformations which decrease the functional to the second order. The choice of the defined space depends on the geometric problems and given constraints. For example, ancient mathematicians like Zenodorus and Princess Dido considered isoperimetric problems, finding the largest possible shape with a given perimeter. Thus, using the variational point of view, one considers only variations fixing the boundary measurement. A generalized version is the double bubble conjecture which was only resolved recently by M. Hutchings, F. Morgan, M. Ritore, and A. Ros \cite{HMRR02}. Another example is the partitioning problem of a convex body by least-area hypersurfaces under a type I or type II constraint using the terminology from \cite{BS79}. Type I requires the partitions to have prescribed volume while type II preserves the wetting boundary area. The former is particularly popular in literature, normally associated with constant-mean-curvature (CMC) hypersurfaces. \\

Consequently, one is motivated to study the index of the second variation restricted to deformations satisfying certain constraints. In case of volume-preserving consideration, that is the so-called weak Morse index. The stability case, when the index is zero, has received plenty of interests; see, for example, \cite{HC18, WX19, BCJ88}. Since the constraint is linear, it is easy to see that the difference between the weak Morse index and the general Morse index is zero or one. When the Morse index is zero, obviously the weak Morse index is also zero. There are some special cases when they are equal to each other (and non zero) such as \cite{BB00} (for a non-compact, infinite volume, CMC immersion into a hyperbolic space) and \cite{LR89} (for catenoids and Enneper surfaces with index one). However, one suspects that they are not equal to each other in general \cite{koise01}, \cite{vogel87}, \cite{souam19}. In this sequence of papers, we will identify criteria determining the relation between these index notions. \\
  
Precisely, in this first paper, we study the index theory of a symmetric continuous bilinear form in an abstract Hilbert space, investigating how the index changes when restricting to a subspace of finite co-dimension. One can consider our approach as a local version of the theory developed in \cite{hestenes51}. Instead of an assumption on the bilinear form, we look at the characterization of the complementary subspace.  Then, in a subsequent paper, we apply our abstract formulation to study CMC and minimal hypersurfaces, with and without boundaries. In the subsequent paper, we'll apply our abstract formulation to study critical points of several variational problems in geometry. \\

We first recall the Morse index and nullity of a bilinear form which are applicable for any vector space. 
\begin{definition}
	The Morse index of a bilinear form  $S(\cdot, \cdot)$ in a vector space $V$, $\text{MI}(S, V)$, is the maximal dimension of a subspace of $V$ on which $S(\cdot,\cdot)$ is negative definite. The nullity, $n(S, V)$, is the dimension of the radical of the bilinear form; that is, the set of all $u\in V$ such that $S(u,v)=0$ for all $v\in V$. 
\end{definition}

Motivated by variational problems with constraints, we give the following definition. Let  $\phi_i$ be a linear functional on $V$ with kernel $\text{Ker}(\phi_i)$.  
\begin{definition}
	The Morse index of the bilinear form  $S(\cdot, \cdot)$ with respect to $\phi_1,....\phi_n$, $\text{MI}^{\phi_1,...\phi_n}(S)$, is the index of $S(\cdot,\cdot)$ in $\cap_{i=1}^n\text{Ker}(\phi_i)$, i.e. 
	\[\text{MI}(S, \cap_{i=1}^n\text{Ker}(\phi_i)).\] 
	The nullity of the bilinear form  $S(\cdot, \cdot)$ with respect to $\phi_1,....\phi_n$, $n^{\phi_1,...\phi_n}(S)$, is the nullity $S(\cdot,\cdot)$ in $\cap_{i=1}^n\text{Ker}(\phi_i)$, i.e 
	\[n(S, \cap_{i=1}^n\text{Ker}(\phi_i)).\] 
\end{definition}

To study such an index with constraints, one way is to relate these $\phi_i$'s to elements of the vector space $V$. The Hilbert space formulation exactly provides that bridge. Furthermore, it is easy to see that the index does not change going from a vector space $V$ to a Hilbert space $H$, see Lemma \ref{vtoH}, if $V$ is dense in $H$.

\begin{remark}
	When the context is clear, to simplify notation, we'll drop the space factor. That is, we write $\text{MI}(S)$ instead of $\text{MI}(S, \cdot)$. 
\end{remark}

Let $H$ be a separable Hilbert space with an inner product $(\cdot, \cdot)$. $S(\cdot, \cdot)$ is a symmetric continuous bilinear form and $\phi_i$ is a continuous linear functional on $H$.
The inner product on $H$ induces a linear map $\mathcal{S}$ from $H$ to its continuous dual $H^\ast$ such that, for all $v\in H$, 
\[S(u,v)=(\mathcal{S}u)(v).\] 
Via the Riesz representation theorem, $H^\ast$ can be equipped with an inner product so that it is isometric to $H$. 

\begin{definition} Let $\text{ran}(\mathcal{S})$ be the range of $\mathcal{S}$, $\overline{\text{ran}(\mathcal{S})}$ its closure by the induced norm. $\overline{\text{ran}(\mathcal{S})}-\text{ran}(\mathcal{S})$ is called the set of pure limit points. 
\end{definition}
\begin{remark}
	When the associated self-adjoint operator has closed range then the set of pure limit points is empty. 
\end{remark}
One notes that a subspace of finite co-dimension of a Hilbert space is closed and, hence, a Hilbert space. 
We are now ready to state the main theorem. 	
\begin{theorem}
	\label{abstractMorse}
	Let $H$ be a separable Hilbert space and $S(\cdot, \cdot)$ is a continuous symmetric bilinear form. Then for any non-trivial continuous linear functional $\phi$ such that $\phi$ is not a pure limit point, we have 
	\[ \text{MI}^\phi(S)=\begin{cases}
	\text{MI}(S)-1 \text{   if there is $u\in H$ such that $\mathcal{S}u=\phi$ and $\phi(u)\leq 0$}\\
	\text{MI}(S) \text{  otherwise.}
	\end{cases}
	\]
	\end{theorem}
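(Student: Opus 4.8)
The plan is to compare the index on $H$ with the index on the closed hyperplane $K:=\text{Ker}(\phi)$ by first establishing a universal two-sided bound and then, in each case, deciding whether the lower or the upper value is attained. Using the Riesz identification of $H^\ast$ with $H$, I regard $\mathcal{S}\colon H\to H$ as a bounded self-adjoint operator with $S(u,v)=(\mathcal{S}u,v)$ and regard $\phi$ as a vector of $H$, so that $\phi(v)=(\phi,v)$ and $\phi(u)=(\mathcal{S}u,u)=S(u,u)$ whenever $\mathcal{S}u=\phi$. The first step is the elementary sandwich
\[
\text{MI}(S,H)-1\ \le\ \text{MI}(S,K)\ \le\ \text{MI}(S,H).
\]
The upper bound holds because a negative definite subspace of $K$ is also a negative definite subspace of $H$, and the lower bound because intersecting a maximal negative definite subspace $N\subseteq H$ with the codimension-one space $K$ removes at most one dimension while preserving negative definiteness. (If $\text{MI}(S,H)=\infty$ both sides are infinite and there is nothing to prove, so I assume $\text{MI}(S,H)=:m<\infty$.) Everything then reduces to deciding when the drop by one occurs.

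For the \emph{drop} direction (Case A: there is $u_0$ with $\mathcal{S}u_0=\phi$ and $\phi(u_0)=S(u_0,u_0)\le 0$) I would show $\text{MI}(S,H)\ge \text{MI}(S,K)+1$ by enlarging a maximal negative definite $N_K\subseteq K$ by one dimension inside $H$; combined with the sandwich this forces $\text{MI}(S,K)=\text{MI}(S,H)-1$. When $S(u_0,u_0)<0$ this is immediate: since $S(u_0,n)=\phi(n)=0$ for $n\in K$, the vector $u_0$ is $S$-orthogonal to $N_K$, and $N_K\oplus\text{span}(u_0)$ is negative definite of dimension $\dim N_K+1$. The delicate situation is $S(u_0,u_0)=0$ with $\phi\ne 0$, where $u_0$ lies in $K$ and in fact in the radical of $S|_K$, so the clean $S$-orthogonal splitting degenerates. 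Here I would pick $v_1$ with $\phi(v_1)\ne 0$, subtract its $S$-projection onto $N_K$ to make it $S$-orthogonal to $N_K$, and observe that on $Q:=\text{span}(u_0,v_1)$ the form has matrix $\left(\begin{smallmatrix}0 & c\\ c & d\end{smallmatrix}\right)$ with $c=\phi(v_1)\ne 0$, hence determinant $-c^2<0$ and signature $(1,1)$; any negative vector $z\in Q$ is $S$-orthogonal to $N_K$, so $N_K\oplus\text{span}(z)$ is again negative definite of dimension $\dim N_K+1$.

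For the \emph{no-drop} direction I must produce a negative definite subspace of $K$ of full dimension $m$, and the method is a shearing projection $n\mapsto \hat n:=n-\tfrac{\phi(n)}{\phi(y)}\,y$ of a maximal negative definite $N\subseteq H$ into $K$, with $y$ chosen so that $S(\hat n,\hat n)\le S(n,n)$ and so that $n\mapsto\hat n$ is injective on $N$. In Case B1 ($\mathcal{S}u_0=\phi$ with $\phi(u_0)>0$) I take $y=u_0$ and compute $S(\hat n,\hat n)=S(n,n)-\phi(n)^2/\phi(u_0)\le S(n,n)<0$. In Case B2 ($\phi\notin\text{ran}(\mathcal{S})$) the hypothesis that $\phi$ is not a pure limit point is exactly what saves the argument: it upgrades to $\phi\notin\overline{\text{ran}(\mathcal{S})}=\ker(\mathcal{S})^\perp$, so $\phi$ has a nonzero component $k\in\ker(\mathcal{S})$, a radical vector with $\phi(k)=\|k\|^2\ne 0$; taking $y=k$ and using $S(k,\cdot)=0$ yields $S(\hat n,\hat n)=S(n,n)<0$ with $n\mapsto\hat n$ injective, whence $\text{MI}(S,K)\ge m$.

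I expect the main obstacle to be precisely the extraction of a radical vector from the ``not a pure limit point'' hypothesis in Case B2, together with the degenerate null case $S(u_0,u_0)=0$ in Case A where the splitting fails; these are the two places where the naive orthogonal decomposition is unavailable. The remaining work — injectivity of the shear, the opening sandwich, and collating the four sub-cases into the two branches of the statement according to whether $\phi\in\text{ran}(\mathcal{S})$ and the sign of $\phi(u_0)$ — I expect to be routine.
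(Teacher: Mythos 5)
Your proposal is correct, and while it overlaps substantially with the paper's argument, it handles the ``drop'' cases in a genuinely different (and arguably cleaner) way. The opening sandwich is the paper's Lemma \ref{atmost1}, and your no-drop cases B1 and B2 are, up to phrasing, the paper's Proposition \ref{phiupos} and its Case (i): your shear $n\mapsto n-\frac{\phi(n)}{\phi(y)}y$ applied to a maximal negative subspace $N$ is exactly the paper's passage from $W$ to $\mathrm{Ker}(\phi\mid_{\mathrm{span}(W,y)})$, and your extraction of a nonzero kernel component $k$ of $\bar\phi$ from the not-a-pure-limit-point hypothesis is precisely how the paper uses that hypothesis. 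Where you diverge is in Case A: the paper argues \emph{downward}, taking a maximal negative subspace $W\subset H$ (containing $u_0$ when $\phi(u_0)<0$, or merely not contained in $\mathrm{Ker}(\phi)$ when $\phi(u_0)=0$, via Lemma \ref{maximalnotinkernel}) and then proving by contradiction that $\mathrm{Ker}(\phi\mid_W)$ is \emph{maximal} inside $\mathrm{Ker}(\phi)$, which requires the two-step decomposition $H=\mathrm{span}(u)\oplus W_1\oplus W^{\perp}$ and, in the degenerate case $\phi(u_0)=0$, a further change of basis in $W^\perp$ (Proposition \ref{phiuzero}). You instead argue \emph{upward}, extending a maximal negative subspace $N_K$ of the hyperplane by one dimension inside $H$; in the degenerate case your observation that $\mathrm{span}(u_0,v_1')$ carries the matrix $\left(\begin{smallmatrix}0&c\\ c&d\end{smallmatrix}\right)$ with $c=\phi(v_1)\neq0$, hence signature $(1,1)$, replaces the paper's most delicate maximality claim with a two-line linear-algebra fact, and combined with the sandwich it forces the drop without ever certifying maximality of any particular subspace of $\mathrm{Ker}(\phi)$. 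The one point worth making explicit when you write this up is that the quantity $\phi(u)=S(u,u)$ is independent of the choice of preimage $u$ of $\phi$ (since any two differ by an element of $\mathrm{Ker}(\mathcal{S})$), so that your Cases A, B1, B2 really do partition the two branches of the statement; the paper records this observation just before Lemma \ref{containingu}.
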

\begin{remark}
	Our convention is that $\infty-1=\infty$. Thus, for the case of an infinite index, the result is vacuously true. 
\end{remark}
\begin{remark}
	The assumption that $\phi$ is not a pure limit point turns out to be quite natural in PDE settings, essentially equivalent to the existence of a Fredholm alternative. Nevertheless, from a purely functional analysis point of view, it is interesting to consider $\phi\in \overline{\text{ran}(\mathcal{S})}-\text{ran}(\mathcal{S})$.  
\end{remark}

A notion of stability is generally associated with the case $\text{MI}(S)=0$. The following immediate consequence is an indicator of in-stability. 
\begin{corollary}
	If there is a function $u$ such that $\mathcal{S}u=\phi$, $\vec{0}\neq \phi$, and $\phi(u)\leq 0$ then $\text{MI}(S)>0$. 
\end{corollary}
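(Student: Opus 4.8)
The plan is to read this corollary off directly from Theorem \ref{abstractMorse}, so the real work is just to verify that the theorem's hypotheses are met and then track the arithmetic. First I would observe that the standing assumption of the theorem — that $\phi$ not be a pure limit point — holds automatically here: since $\mathcal{S}u = \phi$, the functional $\phi$ lies in $\text{ran}(\mathcal{S})$, whereas by definition the pure limit points live in $\overline{\text{ran}(\mathcal{S})}-\text{ran}(\mathcal{S})$. Together with the given $\vec 0 \neq \phi$, this means $\phi$ is a non-trivial continuous linear functional that is not a pure limit point, so the theorem applies.

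Next, the two hypotheses $\mathcal{S}u = \phi$ and $\phi(u) \leq 0$ are exactly the conditions defining the first branch of the theorem's dichotomy. Hence $\text{MI}^\phi(S) = \text{MI}(S) - 1$. Since the Morse index of a bilinear form is by definition the maximal dimension of a negative definite subspace, it is non-negative; thus $0 \leq \text{MI}^\phi(S) = \text{MI}(S) - 1$, which forces $\text{MI}(S) \geq 1 > 0$. (If $\text{MI}(S) = \infty$ the conclusion is immediate.)

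As a sanity check I would also note that a self-contained argument is available without invoking the full theorem. First, $u \neq 0$, since $\mathcal{S}u = \phi \neq \vec 0$. The identity $\mathcal{S}u = \phi$ unwinds to $S(u,v) = \phi(v)$ for all $v \in H$, so in particular $S(u,u) = \phi(u) \leq 0$. If $S(u,u) < 0$, then $u$ spans a one-dimensional negative definite subspace and we are done. If instead $S(u,u) = 0$, then because $\phi \neq \vec 0$ there is some $v$ with $S(u,v) = \phi(v) \neq 0$; setting $w = v - \lambda u$ gives $S(w,w) = S(v,v) - 2\lambda S(u,v)$, which is affine in $\lambda$ with nonzero slope, so a suitable $\lambda$ makes $S(w,w) < 0$ while keeping $w \neq 0$.

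Being a corollary, this statement presents no genuine obstacle; the only points requiring a little care are the non-negativity of the index — so that $\text{MI}(S) - 1 \geq 0$ genuinely forces $\text{MI}(S) \geq 1$ — and the convention $\infty - 1 = \infty$ covering the degenerate infinite-index case.
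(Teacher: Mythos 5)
Your proposal is correct and matches the paper's intent exactly: the corollary is stated there as an immediate consequence of Theorem \ref{abstractMorse}, and your verification that $\phi\in\text{ran}(\mathcal{S})$ rules out the pure-limit-point issue, together with $\text{MI}^\phi(S)\geq 0$ forcing $\text{MI}(S)\geq 1$, is precisely the intended reading. The additional self-contained argument (perturbing by $w=v-\lambda u$ when $S(u,u)=0$) is a valid bonus but not needed.
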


It is also possible to generalize the result from a single $\phi$ to several. 
\begin{theorem}
	\label{severalfunc}
	Let $H$ be a separable Hilbert space and $S(\cdot, \cdot)$ is a continuous symmetric bilinear form. Suppose that, for $i=1,...n$,
	\[\mathcal{S}(u_i)=\phi_i,\]
	and $\{\phi_i\}_{i=1}^n$ are linearly independent. Then, \[\text{MI}^{\phi_1,...,\phi_n}(S)=\text{MI}(S)-c,\]
	where $c$ is the number of non-positive eigenvalues of the symmetric matrix $S(u_i,u_j)$. In particular, 
	\[ \text{MI}(S)\geq c.\]
\end{theorem}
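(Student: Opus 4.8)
The plan is to reduce everything to the single-functional result, Theorem \ref{abstractMorse}, by peeling off one constraint at a time, after a preliminary normalization that makes the representatives $u_i$ mutually $S$-orthogonal. Write $W = \cap_{i=1}^n \text{Ker}(\phi_i)$, so that $\text{MI}^{\phi_1,\ldots,\phi_n}(S) = \text{MI}(S, W)$, and let $A = (S(u_i, u_j))$ be the symmetric Gram matrix whose non-positive eigenvalues we are counting. Note first that linear independence of the $\phi_i$ forces the $u_i$ to be linearly independent, since $\sum a_i u_i = 0$ would give $\sum a_i \phi_i = \mathcal{S}(\sum a_i u_i) = 0$.

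First I would diagonalize $A$. Since $A$ is real symmetric there is an orthogonal matrix $P$ with $PAP^T = \text{diag}(\lambda_1, \ldots, \lambda_n)$, the $\lambda_i$ being the eigenvalues of $A$. Replacing $u_i$ by $\tilde u_i = \sum_j P_{ij} u_j$ and $\phi_i$ by $\tilde\phi_i = \mathcal{S}\tilde u_i = \sum_j P_{ij}\phi_j$, I obtain $S(\tilde u_i, \tilde u_j) = \lambda_i \delta_{ij}$, while the span of the functionals, and hence $W$, is unchanged; the number $c$ of non-positive eigenvalues is an orthogonal invariant, so nothing is lost. After this step I may assume $S(u_i, u_j) = \lambda_i \delta_{ij}$, and in particular $\phi_k(u_j) = S(u_k, u_j) = \lambda_k\delta_{kj}$.

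Next I would set up the telescoping filtration $W_0 = H \supseteq W_1 \supseteq \cdots \supseteq W_n = W$ with $W_k = \cap_{i=1}^k \text{Ker}(\phi_i)$, each a closed subspace of finite codimension and hence a Hilbert space. The crucial payoff of the diagonalization is that $u_k \in W_{k-1}$: indeed $\phi_i(u_k) = \lambda_i\delta_{ik} = 0$ for every $i < k$. Because $S(u_k, v) = \phi_k(v)$ for all $v \in H$, in particular for all $v \in W_{k-1}$, the element $u_k$ is a genuine representative of $\phi_k|_{W_{k-1}}$ under the operator $\mathcal{S}_{k-1}$ associated with $S|_{W_{k-1}}$; being in the range, it is certainly not a pure limit point. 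Moreover $\phi_k|_{W_{k-1}} \neq 0$, since a functional vanishing on the common kernel of $\phi_1, \ldots, \phi_{k-1}$ would be a linear combination of them, contradicting linear independence. Thus Theorem \ref{abstractMorse} applies at each stage with representative $u_k$ and value $\phi_k(u_k) = \lambda_k$, giving $\text{MI}(S, W_k) = \text{MI}(S, W_{k-1}) - 1$ when $\lambda_k \leq 0$ and $\text{MI}(S, W_k) = \text{MI}(S, W_{k-1})$ when $\lambda_k > 0$.

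Summing these relations from $k = 1$ to $n$ collapses the filtration and yields $\text{MI}(S, W) = \text{MI}(S, H) - c$, since the total decrease counts exactly the indices $k$ with $\lambda_k \leq 0$, i.e. the non-positive eigenvalues of $A$; the inequality $\text{MI}(S) \geq c$ is then immediate from $\text{MI}(S, W) \geq 0$. I expect the main obstacle to be the bookkeeping across the successive restrictions: a priori the representative of a functional changes when one passes to a subspace, and the sign condition $\phi(u) \leq 0$ governing Theorem \ref{abstractMorse} must be tracked correctly through the stages. The orthogonalization is precisely what removes this difficulty, as it guarantees that each original representative $u_k$ already lies in $W_{k-1}$ and continues to represent $\phi_k$ there, so that the same vectors serve at every stage and the relevant sign is read off directly from the eigenvalue $\lambda_k$.
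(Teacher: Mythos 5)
Your proposal is correct and follows essentially the same route as the paper: diagonalize the Gram matrix $S(u_i,u_j)$ so that the representatives become mutually $S$-orthogonal, observe that each $u_k$ then lies in $\cap_{i<k}\text{Ker}\phi_i$ and still represents $\phi_k$ there, and peel off one constraint at a time via Theorem \ref{abstractMorse}. Your use of an orthogonal change of basis (so the diagonal entries are literally the eigenvalues) and your explicit verification that $\phi_k$ restricted to the intermediate subspace is non-trivial and not a pure limit point are slightly more careful renderings of steps the paper treats briefly, but the argument is the same.
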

Similarly, the following statements address the nullity with a constraint. They will not be used in the subsequent paper but are presented for independent interests.  
\begin{theorem} 
	\label{abstractnullity}
	Let $H$ be a separable Hilbert space, $S(\cdot, \cdot)$ be a continuous symmetric bilinear form, and  $\phi$ is a  nonzero continuous linear functional. Then, we have the followings:
	\begin{enumerate}
		\item If there is $u\in H$ such that $\mathcal{S}u=\phi$, $\phi(u)=0$, then $n^\phi(S)=n(S)+1$; 
		\item If $\phi\notin \overline{\text{ran}(\mathcal{S})}$, $n^\phi(S)=n(S)-1$;\\
		\item Otherwise, $n^\phi(S)=n(S)$.
	\end{enumerate}
\end{theorem}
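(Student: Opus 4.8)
The plan is to pass, via the Riesz representation theorem, to a vector formulation: identify the functional $\phi$ with a vector $p\in H$ so that $\phi(v)=(p,v)$, and regard $\mathcal S$ as a bounded self-adjoint operator on $H$, self-adjointness being exactly the symmetry of $S$. Write $N=\ker\mathcal S$ for the radical of $S$, so $n(S)=\dim N$, and $K=\ker\phi=p^{\perp}$ for the constraint hyperplane. The entire argument then reduces to computing the radical of $S|_K$. Since $K$ is closed with $K^{\perp}=\mathrm{span}(p)$, a vector $u\in K$ lies in the radical of $S|_K$ exactly when $(\mathcal S u,v)=0$ for all $v\in p^{\perp}$, i.e.\ when $\mathcal S u\in\mathrm{span}(p)$. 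Thus the quantity to measure is
\[ R:=\{u\in p^{\perp}:\ \mathcal S u\in \mathrm{span}(p)\},\qquad n^{\phi}(S)=\dim R. \]

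Next I would stratify $R$ by the scalar $\lambda$ in $\mathcal S u=\lambda p$. Because $p\neq 0$, the assignment $u\mapsto\lambda$ is a well-defined linear map $\Lambda\colon R\to\R$ whose kernel is $R_0:=N\cap p^{\perp}$. Hence $\dim R$ equals either $\dim R_0$ or $\dim R_0+1$, the latter occurring precisely when $\Lambda\neq 0$, that is, when some $u\in p^{\perp}$ satisfies $\mathcal S u=p$. Both the value $\dim R_0$ and the (non)vanishing of $\Lambda$ are governed by a single structural fact: since $\mathcal S$ is self-adjoint, $\overline{\mathrm{ran}(\mathcal S)}=N^{\perp}$, so $p\perp N$ if and only if $\phi\in\overline{\mathrm{ran}(\mathcal S)}$.

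From this I would read off $\dim R_0$: if $\phi\in\overline{\mathrm{ran}(\mathcal S)}$ then $p\perp N$, so $N\subseteq p^{\perp}$ and $R_0=N$, giving $\dim R_0=n(S)$; if $\phi\notin\overline{\mathrm{ran}(\mathcal S)}$ then $\phi|_N$ is a nonzero functional on $N$, so $R_0=\ker(\phi|_N)$ has codimension one and $\dim R_0=n(S)-1$. For $\Lambda$, the key observation is that when $p\in\mathrm{ran}(\mathcal S)$ every preimage $w$ of $p$ yields the same value $\phi(w)=(p,w)$, since distinct preimages differ by an element of $N$ and $p\perp N$; consequently a preimage lying in $p^{\perp}$ exists if and only if $\phi(w)=0$, which is exactly the hypothesis of case (1). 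Combining these: case (1) gives $\phi\in\mathrm{ran}(\mathcal S)$ (hence $\dim R_0=n(S)$) together with $\Lambda\neq 0$, so $n^{\phi}(S)=n(S)+1$; case (2) gives $\phi\notin\overline{\mathrm{ran}(\mathcal S)}$ (hence $\dim R_0=n(S)-1$) and, since then $\phi\notin\mathrm{ran}(\mathcal S)$, $\Lambda=0$, so $n^{\phi}(S)=n(S)-1$; and case (3), being the complement of (1) and (2), forces $\phi\in\overline{\mathrm{ran}(\mathcal S)}$ with $\Lambda=0$, so $n^{\phi}(S)=n(S)$.

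I expect the main obstacle to be the bookkeeping that makes case (1) a genuine $+1$ rather than a no-op: one must verify simultaneously that $R_0=N$ (so that no codimension is lost when intersecting with $p^{\perp}$) and that the extra dimension supplied by $\Lambda\neq 0$ is not already counted inside $R_0$, both of which hinge on the preimage-independence of $\phi(w)$ and hence on self-adjointness. A secondary point requiring care is checking that the three cases are mutually exclusive and exhaustive; in particular, that (1) and (2) cannot overlap (they cannot, since (1) forces $\phi\in\mathrm{ran}(\mathcal S)\subseteq\overline{\mathrm{ran}(\mathcal S)}$) and that the pure-limit-point situation $\phi\in\overline{\mathrm{ran}(\mathcal S)}-\mathrm{ran}(\mathcal S)$ lands correctly in case (3), so that no separate hypothesis on $\phi$ is needed here.
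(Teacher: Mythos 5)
Your proposal is correct and follows essentially the same route as the paper: your set $R=\{u\in p^{\perp}:\mathcal S u\in\mathrm{span}(p)\}$ is exactly the content of the paper's Lemma \ref{lemma1null}, and your dichotomy on whether $\phi\in\overline{\mathrm{ran}(\mathcal S)}$ (equivalently $p\perp N$) matches the paper's two cases, with your map $\Lambda$ merely repackaging the paper's criterion that the count jumps precisely when some preimage $u$ of $\phi$ satisfies $\phi(u)=0$. Your closing observation that the pure-limit-point situation needs no separate hypothesis and falls into case (3) is also consistent with the paper's treatment.
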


\begin{theorem}
	\label{severalfuncnullity}
	Let $H$ be a separable Hilbert space and $S(\cdot, \cdot)$ is a continuous symmetric bilinear form. Suppose that, for $i=1,...n$,
	\[\mathcal{S}(u_i)=\phi_i,\]
	and $\{\phi_i\}_{i=1}^n$ are linearly independent. Then, \[n^{\phi_1,...,\phi_n}(S)=n(S)+c,\]
	where $c$ is the dimension of the null space of the symmetric matrix $S(u_i,u_j)$. 
\end{theorem}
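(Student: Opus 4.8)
The plan is to compute the radical of $S$ restricted to $W := \cap_{i=1}^n \text{Ker}(\phi_i)$ explicitly, exhibiting it as a direct sum of the full radical $N := \text{Ker}(\mathcal{S})$ (whose dimension is $n(S)$) and a finite-dimensional complement governed entirely by the matrix $M_{ij} := S(u_i,u_j)$. First I would record two elementary facts. Since $\phi_i = \mathcal{S}u_i$ and $\{\phi_i\}$ are linearly independent, the vectors $\{u_i\}$ are themselves linearly independent, because any relation $\sum_i a_i u_i = 0$ pushes forward under $\mathcal{S}$ to $\sum_i a_i \phi_i = 0$. Moreover, since $W$ is cut out by the $n$ linearly independent continuous functionals $\phi_i$, its annihilator $W^0 := \{\psi \in H^\ast : \psi|_W = 0\}$ is exactly $\text{span}\{\phi_1,\dots,\phi_n\}$; this is the standard finite-codimension fact that a functional vanishing on the common kernel of finitely many functionals is a linear combination of them.

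Next I would characterize $R_W$, the radical of $S$ in $W$. By definition $u \in R_W$ iff $u \in W$ and $S(u,v) = (\mathcal{S}u)(v) = 0$ for all $v \in W$; the latter condition says precisely that $\mathcal{S}u \in W^0 = \text{span}\{\phi_i\} = \mathcal{S}(\text{span}\{u_i\})$. Hence there are scalars $b_j$ with $\mathcal{S}u = \sum_j b_j \phi_j = \mathcal{S}\bigl(\sum_j b_j u_j\bigr)$, so that $u - \sum_j b_j u_j \in N$; that is, every element of $R_W$ has the form $u = w_0 + \sum_j b_j u_j$ with $w_0 \in N$. Imposing the remaining membership $u \in W$ and using $S(u_i, w_0) = 0$ (since $w_0 \in N$), I compute $\phi_i(u) = S(u_i, u) = \sum_j M_{ij} b_j$, so that $u \in W$ is equivalent to $Mb = 0$.

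Then I would verify the converse: for any $w_0 \in N$ and any $b \in \text{Ker}(M)$, the vector $u = w_0 + \sum_j b_j u_j$ indeed lies in $R_W$, since $u \in W$ by the computation above and, for $v \in W$, $S(u,v) = \sum_j b_j \phi_j(v) = 0$. This identifies $R_W = N + K$ with $K := \{\sum_j b_j u_j : Mb = 0\}$. Because the $u_j$ are independent, the map $b \mapsto \sum_j b_j u_j$ is injective, so $\dim K = \dim \text{Ker}(M) = c$. Finally the sum is direct: if $w = \sum_j b_j u_j \in N \cap K$, then $0 = \mathcal{S}w = \sum_j b_j \phi_j$ forces $b = 0$ by independence of the $\phi_j$, whence $w = 0$. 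Therefore $n^{\phi_1,\dots,\phi_n}(S) = \dim R_W = \dim N + \dim K = n(S) + c$.

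The only genuinely analytic input is the identification $W^0 = \text{span}\{\phi_i\}$, and the clean translation of the radical condition ``$S(u,\cdot)$ vanishes on $W$'' into ``$\mathcal{S}u \in W^0$''; this is where continuity and the finite-codimension hypothesis enter, and I expect it to be the main point to get right. Everything else is finite-dimensional linear algebra driven by the relation $\phi_i = \mathcal{S}u_i$, running parallel to the index computation of Theorem \ref{severalfunc}. If $n(S) = \infty$, the statement is read with the convention $\infty + c = \infty$ and the same decomposition applies verbatim.
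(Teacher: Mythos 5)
Your proof is correct, but it takes a genuinely different route from the paper. The paper proves this theorem in one line by mirroring the proof of Theorem \ref{severalfunc}: diagonalize the symmetric matrix $S(u_i,u_j)$ to replace $\phi_1,\dots,\phi_n$ by $S$-orthogonal functionals $\phi_1',\dots,\phi_n'$ cutting out the same subspace, then induct on $n$, applying the single-constraint result (Theorem \ref{abstractnullity}) once per functional; each step adds $1$ to the nullity exactly when the corresponding diagonal entry $S(u_i',u_i')=\phi_i'(u_i')$ vanishes, and the count of such entries is $c$. You instead compute the radical $R_W$ of $S$ on $W=\cap_i\mathrm{Ker}(\phi_i)$ in closed form, showing $R_W=N\oplus K$ with $K=\{\sum_j b_j u_j: Mb=0\}$, via the annihilator identity $W^0=\mathrm{span}\{\phi_i\}$ and the translation of the radical condition into $\mathcal{S}u\in W^0$. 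Your argument buys an explicit description of the new null directions (they are spanned by $\sum_j b_j u_j$ for $b\in\mathrm{Ker}(M)$, modulo $N$) and avoids both the diagonalization and the bookkeeping needed in the induction to verify that each $\phi_{k+1}'$ remains a nontrivial functional on the intermediate subspace $H_k$; the paper's approach, by contrast, reuses machinery already built for the index statement and keeps the two proofs structurally parallel. Both arguments correctly exploit that every $\phi_i$ lies in $\mathrm{ran}(\mathcal{S})$ by hypothesis, so the exceptional case $\phi\notin\overline{\mathrm{ran}(\mathcal{S})}$ of Theorem \ref{abstractnullity} never arises.
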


\section{Preliminaries}
\label{prelim}
First, we record our notations and conventions. 
\begin{itemize}
	\item Let $S(\cdot,\cdot)$ be a symmetric, continuous, bilinear form in a Hilbert space $H$ with an inner product $(\cdot,\cdot)$. $\mathcal{S}$ is the induced linear map from $H$ to its continuous dual $H^\ast$. $\mathbf{S}$ is the associated self-adjoint operator from $H$ to itself. $\mathfrak{S}$ is the associated quadratic form. 
	\item $\vec{0}$ is the zero vector in a Hilbert space.
	\item $\phi$ is a continuous linear functional then $\bar{\phi}\in H$ is the correspondence due to the Riesz representation theorem.

\end{itemize}

 Let $S(\cdot, \cdot)$ be a symmetric continuous bilinear form on a separable Hilbert space $H$ with an inner product $(\cdot, \cdot)$. Equivalently, $S(\cdot, \cdot)$ is bounded. That is, for any $u, v\in H$ there is a universal constant $c$ such that 
 \[S(v,u)=S(u,v)\leq c ||u||||v||.\]
 
 Due to the Riesz representation theorem, we can identify $H$ with its continuous dual of linear continuous functionals via the isomorphism
\[\pi: u\mapsto u^\ast: u^\ast(v)=(u,v).\]
The inner product on $H^\ast$ is given by
\[(u^\ast, v^\ast)=(u,v).\]
Obviously, $||u^\ast||_{H^\ast}= ||u||_{H}$. \\

It is well-known that $S$ is totally determined by its quadratic form 
\[\mathfrak{S}(v):=S(v,v).\] 
Also, it induces a linear map $\mathcal{S}$ from $H$ to its continuous dual $H^\ast$ such that, for all $v\in H$, 
\[S(u,v)=\mathcal{S}u(v).\] 
Similarly, due to the Riesz representation theorem, one can also think of an associated self-adjoint operator $\mathbf{S}: H \mapsto H$ such that, for all $v\in H$,
\[S(u,v)= (\mathbf{S}u, v).\]
We have, 
\[||\mathbf{S}u||^2=(\mathbf{S}u, \mathbf{S}u)=S(u, \mathbf{S}u)\leq c||u||||\mathbf{S}u||.\]
Therefore, $||\mathbf{S}u||\leq c||u||$ and $\mathbf{S}$ is a bounded self-adjoint operator. For $\mathbf{S}$, let $\text{ran}(\mathbf{S})$ denote its range and $\text{Ker}(\mathbf{S})$ its kernel. It is immediate that 
\[\text{Ker}(\mathbf{S})=\text{Ker}(\mathcal{S}).\]

Furthermore, one can define a projection operator with respect to such a bilinear form. In particular, 
\[ Pro^S_u(v)=\frac{S(u,v)}{S(u,u)}u\]
is the projection of $u$ onto $v$ with respect to $S$ for any $S(u, u)\neq 0$. Since $H$ contains a countable basis, by adjusting the Gram-Schmidt orthogonalization process appropriately, one can diagonalize $H$ with respect to $S$. The following definitions illustrate some geometry induced by $S(\cdot, \cdot)$.

\begin{definition} We have the followings:
	\begin{itemize}
		\item A vector $u\in H$ is $S$-perpendicular to $v\in H$ if $S(u,v)=0$. 
		\item A vector $v \in H$ is called isotropic if $S(v,v)=0$. Otherwise, it is called non-isotropic. 
		\item A subspace $W\subset H$ is called isotropic if $S \mid_{W\times W}=0$. 
		\item For any subspace $W\subset H$, its $S$-perpendicular subspace is defined as 
		\[ W^\perp=\{v\in H| S(v, v_1)=0 ~~\forall v_1\in W\}.\]
	\end{itemize}
\end{definition}

Thus, a subspace $W$ is isotropic if and only if $W\subset W^\perp$. Moreover, if $W$ contains no isotropic vector then 
\[H=W\oplus W^\perp,\]
where the direct product is with respect to $S$-perpendicularity. 

 Also, since $\mathbf{S}$ is a bounded self-adjoint operator, the closure of its range is exactly the $S$-orthogonal complement of its kernel. That is,
\begin{align*}
\overline{\text{ran}(\mathbf{S})}&=(\text{Ker}(\mathbf{S}))^\perp,\\
H &=\overline{\text{ran}(\mathbf{S}}) \oplus \text{Ker}(\mathbf{S}).
\end{align*}
The direct sum here is with respect to either the inner product or the bilinear form $S(\cdot, \cdot)$. Correspondingly, in the context of the isomorphism $\pi:H\mapsto H^\ast$
\[H^\ast = \pi(\text{Ker}(\mathbf{S}))\oplus \overline{\text{ran}(\mathcal{S})}.\]
\begin{definition}
	$ \overline{\text{ran}(\mathcal{S})}-\text{ran}(\mathcal{S})$ is called the set of pure limit points. 
\end{definition}
It is well-known that, for a normal operator with $0$ in its spectrum, its range is closed if and only if $0$ is not a limit point of the spectrum \cite[Proposition XI.4.5]{conwaybook90}. For a general bounded operator, a generalized version is given by \cite{KN00} considering the spectrum of the product of the operator and its adjoint. \\

The following well-known result gives a fundamental decomposition of $H$ with respect to $S(\cdot, \cdot)$.
\begin{theorem}\cite[Theorem 7.1]{hestenes51}
	\label{funddecom}
	Given a symmetric bilinear form $S(\cdot, \cdot)$, $H$ can be decomposed uniquely as the direct sum
	\[H=H_{-}\oplus H_0\oplus H_+\]
	satisfying the following properties
	\begin{itemize}
		\item $H_{-}$, $H_0$, and $H_+$ are mutually perpendicular and $S$-perpendicular.
		\item $S(\cdot, \cdot)$ is negative definite on $H_{-}$, zero on $H_0$, and positive definite on $H_+$.
	\end{itemize}
\end{theorem}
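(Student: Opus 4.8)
The plan is to build the decomposition from the spectral theorem applied to the bounded self-adjoint operator $\mathbf{S}$, and then to exploit the simultaneous (inner-product and $S$-) orthogonality requirement to force uniqueness. Let $E$ denote the projection-valued spectral measure of $\mathbf{S}$ on $\sigma(\mathbf{S})\subset\R$, so that $\mathbf{S}=\int \lambda\, dE(\lambda)$. I would set
\[ H_-:=E\big((-\infty,0)\big)H,\qquad H_0:=E(\{0\})H,\qquad H_+:=E\big((0,\infty)\big)H. \]
Since these three Borel sets partition $\R$, and the spectral projections of disjoint sets have mutually orthogonal ranges summing to the identity, this is at once an orthogonal direct sum $H=H_-\oplus H_0\oplus H_+$ with respect to the inner product. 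The identification $H_0=E(\{0\})H=\text{Ker}(\mathbf{S})$ is standard and matches the decomposition $H=\overline{\text{ran}(\mathbf{S})}\oplus\text{Ker}(\mathbf{S})$ already recorded above.

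Next I would verify the two structural claims. For definiteness, given $\vec{0}\neq u\in H_+$ the scalar measure $d(E(\lambda)u,u)$ is a positive measure of total mass $\|u\|^2>0$ supported in $(0,\infty)$, so $S(u,u)=(\mathbf{S}u,u)=\int_{(0,\infty)}\lambda\, d(E(\lambda)u,u)>0$; the analogous computation gives $S(u,u)<0$ on $H_-$, while $S\equiv 0$ on $H_0=\text{Ker}(\mathbf{S})$ is immediate. For the two perpendicularities, orthogonality in the inner product is built in by construction; $S$-perpendicularity then follows because each $H_\pm$ is $\mathbf{S}$-invariant (the spectral projections commute with $\mathbf{S}$), so for $u\in H_+$ and $v\in H_-\oplus H_0$ we get $S(u,v)=(\mathbf{S}u,v)=0$, as $\mathbf{S}u\in H_+$ is inner-product orthogonal to $v$.

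The main work, and the step I expect to be the crux, is uniqueness: for an indefinite form the negative and positive subspaces are far from unique unless both orthogonality conditions are imposed simultaneously. Suppose $H=H_-'\oplus H_0'\oplus H_+'$ is any decomposition with the stated properties. First I would show $H_0'=\text{Ker}(\mathbf{S})$: any $u\in H_0'$ is $S$-perpendicular to $H_\pm'$ and $S$-null on $H_0'$, hence $S(u,\cdot)\equiv 0$, so $u\in\text{Ker}(\mathbf{S})$; conversely, for $u\in\text{Ker}(\mathbf{S})$, writing $u=u_-'+u_0'+u_+'$ and using $S(u,u_\pm')=S(u_\pm',u_\pm')=0$ forces $u_\pm'=0$ by definiteness. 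Next, the same $S$-perpendicularity argument as above shows each $H_\pm'$ is $\mathbf{S}$-invariant, hence reduces the self-adjoint operator $\mathbf{S}$, so every spectral projection $E(A)$ preserves $H_\pm'$. Finally, for $\vec{0}\neq u\in H_+'$ I would apply $P:=E\big((-\infty,0)\big)$: on one hand $Pu\in H_+'$ gives $S(Pu,Pu)\geq 0$, while on the other $Pu\in\text{ran}(P)$ forces $S(Pu,Pu)=\int_{(-\infty,0)}\lambda\, d(E(\lambda)Pu,Pu)\leq 0$, so $Pu=\vec{0}$; together with $E(\{0\})u=\vec{0}$ (since $u\perp H_0$) this yields $u=E\big((0,\infty)\big)u\in H_+$. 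Thus $H_+'\subseteq H_+$ and symmetrically $H_-'\subseteq H_-$; since both are orthogonal direct sums exhausting $H$ with $H_0'=H_0$, the inclusions are forced to be equalities, which establishes uniqueness.
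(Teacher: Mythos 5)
Your proof is correct. Note, however, that the paper does not actually prove this statement: Theorem \ref{funddecom} is quoted verbatim from Hestenes \cite[Theorem 7.1]{hestenes51}, where the decomposition is developed within his elementary theory of quadratic forms on Hilbert space. Your route through the spectral theorem for the bounded self-adjoint operator $\mathbf{S}$ is therefore a genuinely self-contained alternative, and it is sound: the existence part is immediate from the spectral projections $E((-\infty,0))$, $E(\{0\})$, $E((0,\infty))$, and your observation that the scalar measure $d(E(\lambda)u,u)$ is a nonzero positive measure supported in $(0,\infty)$ (resp.\ $(-\infty,0)$) does give strict definiteness, not just semi-definiteness. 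The uniqueness argument is the real content and you handle it correctly; the one step worth spelling out is why $H_\pm'$ is $\mathbf{S}$-invariant and reduces $\mathbf{S}$: since the three summands are mutually inner-product orthogonal and span $H$, each equals the orthogonal complement of the sum of the other two (hence is automatically closed), and then $(\mathbf{S}u,v)=S(u,v)=0$ for $u\in H_+'$, $v\in H_-'\oplus H_0'$ places $\mathbf{S}u$ back in $H_+'=(H_-'\oplus H_0')^\perp$; the same applies to the complement, so $E(A)$ indeed preserves $H_+'$. What your approach buys is a short, checkable proof using standard operator theory; what Hestenes' formulation buys is independence from the spectral theorem and a framework (Legendre forms, elementary divisors of quadratic forms) that the paper later invokes for the finiteness of $\dim H_-$ and $\dim H_0$. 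Either way, the statement as given is fully justified by your argument.
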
 
That is, \[H_0=\text{Ker}(\mathbf{S}),\]
and \[\text{MI}(S)=\text{dim}(H_) \text{  and} ~~~ n(S)=\text{dim}(H_0).\] 

A priori, $H_{-}$ and $H_0$ can be subspaces of infinite dimensions. However, in applications, we're only interested in the case that $\text{dim}(H_{-})$ and $\text{dim}(H_{0})<\infty$. That is guaranteed if the associated quadratic form $\mathfrak{S}$ is Legendre \cite[Theorem 11.2]{hestenes51} or the associated self-adjoint operator $\mathbf{S}$ is compact. Another case, which arrives frequently in PDE theory, is that the bilinear form is related to a compact operator constructed by the inverse of some isomorphism between a Hilbert space and its continuous dual via the Riesz representation theorem. \\

As a consequence, we assume that finiteness condition for the rest of the paper. In that case,  for any maximal space $W$ on which $S(\cdot, \cdot)$ is negative definite, the $S$-projection from $W$ to $H_{-}$ is an isomorphism. Thus, $\text{dim}(W)=\text{dim}(H_{-})$ and the following definition is unambiguous. 

\begin{definition}
	\label{strongMI}
	The index of bilinear form $S(\cdot, \cdot)$ on a Hilbert space $H$, denoted by $\text{MI}(S)$, is the dimension of a any maximal subspace of $H$ on which the second variation is negative definite. 
\end{definition}
Consequently, for any vector $u$, which is $S$-perpendicular to a maximal subspace, $S(u,u)\geq 0$. 

\begin{remark}
	In a similar way, one might consider the dimension of a maximal space on which $S(\cdot, \cdot)$ is totally vanishing. It can be shown that the number is equal to the nullity plus the index.  
\end{remark}

As mentioned in the introduction, the index of bilinear form can be defined in any vector space. In our applications, we move back and forth between a vector $V$ which is dense in a Hilbert space $H$. The justification is provided by the following lemma, which is well-known.
\begin{lemma}
	\label{vtoH}
	Let $V\subset H$ is a dense vector space inside a Hilbert space. Let $S(\cdot,\cdot)$ is a continuous symmetric bilinear form on $H$. Then, the index of $S(\cdot, \cdot)$ on $H$ is equal to that on $V$.
\end{lemma}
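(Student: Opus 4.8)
The plan is to prove the two inequalities $\text{MI}(S,V)\le \text{MI}(S,H)$ and $\text{MI}(S,H)\le \text{MI}(S,V)$ separately. The first is immediate: since $V\subset H$, every subspace of $V$ on which $S$ is negative definite is in particular a subspace of $H$ with the same property, so any dimension realized by a negative definite subspace inside $V$ is also realized inside $H$.

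For the reverse inequality, I would fix any finite $k\le \text{MI}(S,H)$ and choose a $k$-dimensional subspace $W\subset H$ with basis $e_1,\dots,e_k$ on which $S$ is negative definite; equivalently, the symmetric $k\times k$ matrix $M=(S(e_i,e_j))$ is negative definite. By density of $V$ in $H$, pick $v_i\in V$ with $\|v_i-e_i\|$ as small as desired. Using boundedness of $S$ together with the bilinear estimate
\[ |S(v_i,v_j)-S(e_i,e_j)|\le c\,\|v_i-e_i\|\,\|v_j\|+c\,\|e_i\|\,\|v_j-e_j\|, \]
the entries of $\widetilde M=(S(v_i,v_j))$ can be made arbitrarily close to those of $M$.

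The key point is that negative definiteness is an open condition: the eigenvalues of a symmetric matrix depend continuously on its entries, and those of $M$ are bounded above by some $-\delta<0$, so for a sufficiently fine approximation all eigenvalues of $\widetilde M$ remain below $-\delta/2$, whence $\widetilde M$ is negative definite. In particular $\widetilde M$ is nonsingular, which forces $v_1,\dots,v_k$ to be linearly independent, since a relation $\sum c_i v_i=\vec 0$ with $c\ne \vec 0$ would give $c^{\mathsf T}\widetilde M c=0$, contradicting negative definiteness. Hence $\text{span}\{v_1,\dots,v_k\}\subset V$ is a $k$-dimensional subspace on which $S$ is negative definite, giving $\text{MI}(S,V)\ge k$.

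Letting $k$ range over all finite values $\le \text{MI}(S,H)$ then yields $\text{MI}(S,V)\ge \text{MI}(S,H)$ when the index is finite, and $\text{MI}(S,V)=\infty$ when it is infinite, using the convention already fixed in the paper. The only point requiring genuine care is the uniformity of the approximation, namely that a single smallness threshold on $\max_i\|v_i-e_i\|$ simultaneously controls all $k^2$ entries of $\widetilde M$ and keeps $\widetilde M$ negative definite; this is the main, though routine, obstacle, and it is dispatched by the continuity of eigenvalues above. Notably, no compactness or finiteness assumption on the index is needed for this argument.
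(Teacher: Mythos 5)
Your proof is correct, but it takes a different route from the paper's. The paper argues the nontrivial inequality $\text{MI}(S,H)\le \text{MI}(S,V)$ by contradiction: it fixes a maximal negative definite subspace $W\subset V$ of dimension $m_V$, extracts a single vector $u\in H$ orthogonal to $W$ with $S(u,u)<0$, approximates $u$ by elements of $V$, projects the approximants off $W$, and uses continuity of $S$ to contradict the maximality of $W$. You instead argue directly: approximate an entire basis $e_1,\dots,e_k$ of a negative definite subspace of $H$ by vectors $v_i\in V$, and observe that the matrix $\bigl(S(v_i,v_j)\bigr)$ converges entrywise to the negative definite matrix $\bigl(S(e_i,e_j)\bigr)$, so that openness of negative definiteness (plus the resulting nonsingularity, which gives linear independence of the $v_i$) produces a $k$-dimensional negative definite subspace inside $V$. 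Your version buys two things: it makes no appeal to the existence of a maximal subspace or to the standing finiteness assumption, handling the infinite-index case uniformly by letting $k$ range over all finite values; and it sidesteps a delicate point in the paper's argument, namely that the approximant $u_n'$ is only inner-product perpendicular to $W$ rather than $S$-perpendicular, so that $\mathrm{span}(W,u_n')$ is not obviously negative definite and the contradiction with maximality requires a further (omitted) justification. The paper's version is shorter in that it approximates only one vector, but your matrix-perturbation argument is the more robust and self-contained of the two.
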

A proof is provided as we couldn't find a reference.
\begin{proof}
	Let $m_H, m_V$ denotes the indices of $S(\cdot,\cdot)$ on $H$ and $V$, respectively. By the definition, \[m_H\geq m_V.\]
	To prove the reverse inequality, we proceed by contradiction. Suppose that $m_H>m_V$. Let $W\subset V$ be a maximal space of $m_V$. Since, $m_H>m_V$, $W$ is not a maximal space of $m_H$. That is, there is $u\in H$ such that $u$ is perpendicular to $W$ and $S(u,u)<0$. 
	
	Since $V$ is dense in $H$, there is a sequence $u_n\in V$ such that $||u_n-u||_H\rightarrow 0$. Furthermore, let $v_n$ be the projection of $u_n$ on $W$ then
	\[||u_n-u||^2=||v_n||^2+||(u_n-v_n)-u||^2.\]
	Thus, for $u'_n=u_n-v_n\in V$, $u'_n$ is perpendicular to $W$, $||u_n'-u||_H\rightarrow 0$. We have
	\[S(u_n', u_n')=S(u,u)+S(u_n'-u,u+ u_n').\]
	Since $S(\cdot,\cdot)$ is continuous and $||u_n'-u||_H\rightarrow 0$, $S(u_n'-u,u+ u_n')\rightarrow 0$. Hence, for sufficiently large $n$, 
	\[S(u_n', u_n')<0,\]
	which is a contradiction to the maximality of $W\subset V$. The proof is finished.  
	  
	\end{proof} 
Moreover, for variational problems with constraints, admissible deformations generally must satisfy some linear condition. For example, the partitioning of a convex body with prescribed volume is equivalent to considering deformations with average zero. Thus, in the context of a Hilbert space, one is motivated to consider the following generalization. It is noted that a subspace of finite codimension in a Hilbert space is a Hilbert space itself.  
\begin{definition}
	The index of the bilinear form  $S(\cdot, \cdot)$ with respect to continuous linear functionals $\phi_1,....\phi_m$, $\text{MI}^{\phi_1,...\phi_m}(S)$, is the index of $S(\cdot,\cdot)$ in $\cap_{i=1}^m\text{Ker}(\phi_i)$. The nullity of the bilinear form  $S(\cdot, \cdot)$ with respect to continuous linear functionals $\phi_1,....\phi_m$, $n^{\phi_1,...\phi_m}(S)$, is the nullity $S(\cdot,\cdot)$ in $\cap_{i=1}^m\text{Ker}(\phi_i)$. 
\end{definition}
To shed light on these quantities, we first observe that, for a continuous linear functional $\phi$, $\text{Ker}(\phi)$ is a subspace of co-dimension at most one. The restriction of $S(\cdot,\cdot)$ and $\mathcal{S}$ on $\text{Ker}(\phi)$ is denoted by the same notation as the following equation holds true for every $u,v\in \text{Ker}(\phi)$
\[S(u,v)=\mathcal{S}u(v).\]
\begin{remark}
That is an advantage of having the abstract continuous dual $H^\ast$. In comparison, the restriction of $\mathbf{S}$ on $\text{Ker}(\phi)$ is the composition of $\mathbf{S}$ and a projection into $\text{Ker}(\phi)$.
\end{remark}
The following is immediate. 
\begin{lemma}\label{atmost1}
	$\text{MI}^\phi(S)$ is either $\text{MI}(S)$ or $\text{MI}(S)-1$. Similarly, $n^{\phi}(S)\geq n(S)-1$.  
\end{lemma}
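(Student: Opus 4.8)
The plan is to sandwich $\text{MI}^\phi(S)$ between $\text{MI}(S)-1$ and $\text{MI}(S)$; since this quantity is an integer (with the convention $\infty-1=\infty$ covering the degenerate case), the first assertion is then immediate. The upper bound $\text{MI}^\phi(S)\le \text{MI}(S)$ is the trivial inclusion: $\text{Ker}(\phi)\subseteq H$, so any subspace of $\text{Ker}(\phi)$ on which $S$ is negative definite is also such a subspace of $H$, and taking maximal dimensions on both sides yields the inequality.

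For the lower bound $\text{MI}^\phi(S)\ge \text{MI}(S)-1$, I would take a maximal subspace $W\subseteq H$ on which $S$ is negative definite, so that $\dim W=\text{MI}(S)=:m$ (finite under the standing finiteness hypothesis; the infinite case is vacuous). The restriction $\phi|_W$ is a linear functional on the finite-dimensional space $W$, whence its kernel $W\cap \text{Ker}(\phi)$ has dimension at least $m-1$. This intersection lies inside $\text{Ker}(\phi)$ and, being a subspace of $W$, is negative definite for $S$, so it witnesses $\text{MI}^\phi(S)\ge m-1$.

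The nullity bound $n^\phi(S)\ge n(S)-1$ follows by the same intersection idea applied to the radical. Writing $R=\text{Ker}(\mathbf{S})=H_0$ for the radical of $S$ in $H$, so that $\dim R=n(S)$ by Theorem \ref{funddecom}, I observe that every $u\in R\cap \text{Ker}(\phi)$ satisfies $S(u,v)=0$ for all $v\in H$, hence in particular for all $v\in \text{Ker}(\phi)$, while $u\in \text{Ker}(\phi)$ itself; thus $u$ belongs to the radical of $S|_{\text{Ker}(\phi)}$. Since $\text{Ker}(\phi)$ has codimension at most one in $H$, we get $\dim(R\cap \text{Ker}(\phi))\ge \dim R-1=n(S)-1$, and therefore $n^\phi(S)\ge n(S)-1$. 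Note that no matching upper bound for the nullity is claimed---indeed it can increase, as the sharper Theorem \ref{abstractnullity} shows.

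The only real bookkeeping issue is confirming that a maximal negative definite subspace realizing $\text{MI}(S)$ exists and is finite-dimensional under the finiteness assumption, so that the step ``the kernel of a functional on $W$ drops dimension by at most one'' is legitimate; the convention $\infty-1=\infty$ disposes of the remaining degenerate cases. Beyond that, the argument is pure finite-dimensional linear algebra together with the decomposition of Theorem \ref{funddecom}; none of the analytic subtleties involving pure limit points enter here, since those are needed only for the precise determinations in the later theorems.
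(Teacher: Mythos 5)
Your proposal is correct and follows essentially the same route as the paper: the trivial inclusion gives the upper bound, and restricting $\phi$ to a maximal negative definite subspace $W$ (whose kernel has codimension at most one in $W$) gives the lower bound, with the nullity handled by the analogous intersection argument that the paper leaves as ``similar.'' No gaps.
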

\begin{proof}
	Obviously, by definition,
	\[ \text{MI}^\phi(S)\leq \text{MI}(S).\]
	On the other hand, take $W$ be a maximal subspace on which $S(\cdot, \cdot)$ is negative definite. Then $\text{Ker}(\phi\mid_W)$ has co-dimension at most one. Obviously, $S(\cdot, \cdot)$ is positive definite on $\text{Ker}(\phi\mid_W)$ which is certainly a subspace of $\text{Ker} (\phi)$. Therefore  
	\[ \text{MI}^\phi(Q)\geq \text{MI}(Q)-1.\]
	The result then follows. The statement for the nullity follows from a similar argument. 
\end{proof}
\begin{remark}
	It is interesting that the nullity might increase when restricting to a smaller subspace. See Theorem \ref{abstractnullity}.
\end{remark}
To determine the relation between $\text{MI}^{\phi}(S)$ with $\text{MI}(S)$, it is essential to consider the effect of $\phi$ on maximal subspaces. It leads to the following.  
\begin{definition}
	A continuous linear function $\phi$ is called $S$-critical if for any maximal subspace $W$ on which $S(\cdot, \cdot)$ is negative definite then $\phi(W)=\mathbb{R}$.  
\end{definition}

\section{Index of a BiLinear Form in a Hilbert Space}
\label{abstract}
In this section, we prove an abstract theorem for the Morse index on a Hilbert subspace and related results. Essentially, we determine the relation between $\text{MI}(S)$ and $\text{MI}^\phi(S)$ for a non-trivial continuous linear functional $\phi$.  Precisely, the main statement is as follows. 
\begin{theorem}
	\label{Scriticality}
	Let $H$ be a separable Hilbert space and $S(\cdot, \cdot)$ be a continuous symmetric bilinear form. Assume that  $\phi$ is a  nonzero continuous linear functional which is not a pure limit point. Then the following assertions are equivalent:
\begin{enumerate}	
	\item    $\text{MI}^\phi(S)=\text{MI}(S)-1$; 
	
	\item  There exists  a $u\in H$ such that $\mathcal{S}u=\phi$ and $\phi(u)\leq 0$.
	
	\item  $\phi$ is $S$-critical i.e. $\phi(W)=\mathbb{R}$ for any maximal subspace $W$ on which $S(\cdot, \cdot)$ is negative definite. 
\end{enumerate}
\end{theorem}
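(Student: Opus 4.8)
The plan is to establish the three-way equivalence by proving the two linkages $(1)\Leftrightarrow(3)$ and $(2)\Leftrightarrow(3)$ separately; the first is essentially formal, while the second is where the hypothesis on pure limit points and the fundamental decomposition of Theorem \ref{funddecom} enter. For $(1)\Leftrightarrow(3)$ I would argue directly from the definitions together with Lemma \ref{atmost1}: the equality $\text{MI}^\phi(S)=\text{MI}(S)$ holds precisely when some negative definite subspace of $\text{Ker}(\phi)$ attains the dimension $\text{MI}(S)$, i.e. precisely when some maximal negative definite subspace $W$ of $H$ is contained in $\text{Ker}(\phi)$, which (since $\phi(W)$ is either $\{0\}$ or $\mathbb{R}$) is exactly the failure of $S$-criticality. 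Because Lemma \ref{atmost1} forces $\text{MI}^\phi(S)\in\{\text{MI}(S),\text{MI}(S)-1\}$, the drop by one is equivalent to $S$-criticality, giving $(1)\Leftrightarrow(3)$ with no appeal to the hypothesis.

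For $(2)\Leftrightarrow(3)$ I would use that $\phi$ is not a pure limit point to split into the two cases $\phi\in\text{ran}(\mathcal{S})$ and $\phi\notin\overline{\text{ran}(\mathcal{S})}$. Throughout I would fix the fundamental decomposition $H=H_{-}\oplus H_0\oplus H_+$ and record two structural facts to be reused: every maximal negative definite subspace projects isomorphically onto $H_{-}$ and so is a graph $\{x+Lx:x\in H_{-}\}$ with $L\colon H_{-}\to H_0\oplus H_+$; and whenever $\mathcal{S}u=\phi$ one has the clean identification $\text{Ker}(\phi)=(\mathbb{R}u)^\perp$ of the kernel with the $S$-perpendicular of the line through $u$. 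In the case $\phi\notin\overline{\text{ran}(\mathcal{S})}$ both (2) and (3) should fail: (2) fails since $\phi$ has no $\mathcal{S}$-preimage, and (3) fails because the Riesz representative $\bar\phi$ then has a nonzero $H_0$-component, which lets me write down an explicit graph over $H_{-}$ valued in $H_0$ (hence still negative definite and maximal) lying inside $\text{Ker}(\phi)$.

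The heart of the matter is the case $\phi\in\text{ran}(\mathcal{S})$, where I would pick $u$ with $\mathcal{S}u=\phi$ (so $\phi(u)=S(u,u)$, independent of the choice of preimage) and decompose $u=u_{-}+u_{+}$. For $\neg(2)\Rightarrow\neg(3)$, when $\phi(u)=S(u_-,u_-)+S(u_+,u_+)>0$ I would produce an explicit maximal negative definite subspace inside $\text{Ker}(\phi)=(\mathbb{R}u)^\perp$ as a graph $\{x+\Psi x\}$ with $\Psi x$ a suitable multiple of $u_+$ chosen to meet the $S$-orthogonality constraint against $u$; a single application of Cauchy--Schwarz in the positive definite form on $H_+$, using $S(u_+,u_+)>-S(u_-,u_-)$, yields the strict inequality guaranteeing negative definiteness. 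For the converse $(2)\Rightarrow(3)$ I would argue by contradiction: if some maximal $W\subseteq\text{Ker}(\phi)=(\mathbb{R}u)^\perp$ existed, then $u\in W^\perp$, on which $S$ is positive semidefinite, forcing $S(u,u)\geq0$; combined with $\phi(u)=S(u,u)\leq0$ this gives $S(u,u)=0$, and then the null vector $u$ of a positive semidefinite form must lie in the radical of $S|_{W^\perp}$, which equals $H_0=\text{Ker}(\mathbf{S})$, whence $\phi=\mathcal{S}u=0$, a contradiction.

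I expect the isotropic boundary case $\phi(u)=0$ to be the main obstacle, and the argument just sketched is designed precisely to absorb it: the non-degeneracy statement $\text{MI}(S)=\dim H_{-}$ from Theorem \ref{funddecom} does not by itself control what happens at $S(u,u)=0$, and a naive inertia count on $H=\mathbb{R}u\oplus(\mathbb{R}u)^\perp$ breaks down there because $u$ becomes isotropic and the splitting is no longer usefully $S$-orthogonal. The clean resolution is to work not with the line $\mathbb{R}u$ but with the positive semidefinite form on $W^\perp$ and to invoke that a null vector of a positive semidefinite form lies in its radical; the only remaining point to verify carefully is that this radical is exactly $H_0$, which follows from the $S$-orthogonality of $H=W\oplus W^\perp$ together with $W$ being negative definite. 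Assembling $(1)\Leftrightarrow(3)$ and $(2)\Leftrightarrow(3)$ then completes the equivalence.
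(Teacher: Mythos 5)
Your proposal is correct, and while it rests on the same toolbox as the paper (Theorem \ref{funddecom}, Lemma \ref{atmost1}, the $S$-orthogonal splitting $H=W\oplus W^\perp$ and positive semidefiniteness of $S$ on $W^\perp$ for maximal $W$), it organizes the hard direction differently. The paper proves $(1)\Leftrightarrow(3)$ exactly as you do, but then links $(1)$ to $(2)$ directly: for $\phi(u)>0$ and for $\bar\phi\notin\mathrm{ran}(\mathbf{S})$ it adjoins $u$ (resp.\ the $H_0$-component of $\bar\phi$) to an arbitrary maximal $W$ and cuts with $\text{Ker}(\phi)$ — essentially your graph constructions, though the paper's version for $\phi(u)>0$ needs no Cauchy--Schwarz since the cross term $S(u,w)=\phi(w)$ is absorbed algebraically; and for $\phi(u)<0$ and $\phi(u)=0$ it runs two separate propositions (\ref{phiuneg}, \ref{phiuzero}), each exhibiting an explicit $(\mathrm{MI}(S)-1)$-dimensional subspace of $\text{Ker}(\phi)$ and proving its maximality by contradiction, with the isotropic case $\phi(u)=0$ requiring the extra auxiliary vector $v=u-\tfrac{S(u,u_0)}{S(u_0,u_0)}u_0$ and Lemma \ref{maximalnotinkernel}. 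Your route instead proves $(2)\Rightarrow(3)$ in one stroke: if a maximal $W$ sat inside $\text{Ker}(\phi)$ then $u\in W^\perp$, so $S(u,u)\geq0$, hence $S(u,u)=\phi(u)=0$, and the null vector of the semidefinite form $S|_{W^\perp}$ lies in its radical, which is $H_0$, forcing $\phi=\mathcal{S}u=0$. This absorbs the boundary case $\phi(u)=0$ and the trivial-index case uniformly and is arguably cleaner than the paper's two maximality arguments; what it costs is that you must route through $(3)$ and Lemma \ref{atmost1} to recover the quantitative statement $(1)$, whereas the paper's constructions produce the maximal subspace of $\text{Ker}(\phi)$ explicitly. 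One small slip in your sketch: the Cauchy--Schwarz inequality you need in the $\phi(u)>0$ case is for the negative definite form on $H_-$ (to bound $S(u_-,x)^2$ by $S(u_-,u_-)S(x,x)$), not for the form on $H_+$; the inequality $S(u_+,u_+)>-S(u_-,u_-)$ then closes the estimate as you indicate.
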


\begin{remark}
	The assumption on the non-triviality of $\phi$ is necessary. If $\phi=\vec{0}$ then, obviously, $\text{Ker}(\phi)=H$ and $\text{MI}^\phi(S)=\text{MI}(S)$. However, for $u\in \text{Ker}(\mathcal{S})$, we have $\mathcal{S}(u)=\phi$ and $\phi(u)=0$. 
\end{remark}

\begin{proof}[Proof of Theorem \ref{Scriticality}.]
First we prove that (1) is equivalent to (3).  Suppose that $\phi$ is $S$-critical.  If $\text{MI}^\phi(S)=\text{MI}(S)$ then there is a maximal subspace $W$ of that dimension in $\text{Ker}\phi$ on which $S(\cdot,\cdot)$ is negative definite. That is, $\phi(W)=0$, a contradiction with the definition of $S$-criticality. Therefore, by Lemma \ref{atmost1}, $\text{MI}^\phi(S)=\text{MI}(S)-1$. 
	
	Conversely, suppose that $\text{MI}^\phi(S)=\text{MI}(S)-1$ and $\phi$ is not $S$-critical. Then, there is a maximal subspace $W$ on which $S(\cdot, \cdot)$ is negative definite and $\phi(W)=0$. But that means $W\subset \text{Ker}(\phi)$ and $\text{MI}^\phi(S)=\text{MI}(S)$, again a contradiction.    

	Now we start to prove (1)  is equivalent to (2).  Via the Riesz representation theorem, each $\phi$ uniquely corresponds to $\bar{\phi}\in H$ such that, for all $v\in H$,
\[\phi(v)=(\bar{\phi}, v).\]

  There are two cases: (i) $\bar\phi\notin \textrm{ran}(\mathbf{S})$ and (ii) $\bar\phi\in \textrm{ran}(\mathbf{S})$.

 {\bf Case (i).}  $\bar\phi\notin \textrm{ran}(\mathbf{S})$. We will show that $\text{MI}^\phi(S)=\text{MI}(S).$
 
 By the decomposition discussed in Section \ref{prelim}, $\bar{\phi}=u+s$ with $u\in \text{Ker}(\mathfrak{S})$ and $s\in \overline{\text{ran}(\mathfrak{S})}$. Since $\phi$ is not a pure limit point, neither is $\bar\phi$ and $\vec{0}\neq u$. Then we have 
\[\phi(u)=(\bar{\phi},u)=(s+u, u)=(u,u)>0.\]

If $\text{MI}(S)=0$, the result follows vacuously. Otherwise, let $W$ be a maximal space on which $S(\cdot, \cdot)$ is negative definite. That is,
\[ \text{dim}(W)=\text{MI}(S)>0.\]
Since $u\neq\vec{0}$ and 
\[ S(u, v)=(\mathcal{S}u)v=\vec{0}v=0,\]
we have $u\notin W$. Then let $W_1=\text{span}(W, u)$ and $W_2=\text{Ker}(\phi\mid_{W_1})$. It is clear that 
\[ \text{dim}(W_1)=\text{MI}(S)+1.\]
Since $\phi(u)\neq 0$, the map $\phi: W_1\mapsto \mathbb{R}$ is onto, then
\[ \text{dim}(W_2)=\text{dim}(W_1)-1=\text{MI}(S).\]
Let $\vec{0}\neq v\in W_2$. Then 
\[v=w+cu,\]
for $\vec{0}\neq w\in W$ and some constant $c$. Thus,
\[S(v,v)=S(w,w)+2cS(u,w)+c^2S(u,u).\]
Since $u\in \text{Ker}(\mathfrak{S})$, $S(u,w)=S(u,u)=0$. Thus, $S(v,v)=S(w,w)<0$ and $S(\cdot, \cdot)$ is negative definite on $W_2$. Consequently, $\text{MI}^\phi(S)=\text{MI}(S)$.   

{\bf Case (ii).} $\phi\in \text{ran}(\mathcal{S})$. Then there is $u\in H$ such that
\begin{align*}
\mathcal{S}u &={\phi},\\
\mathbf{S}u &=\bar{\phi},\\
S(u,u) &=(\mathbf{S}u,u)=(\bar{\phi}, u)=\phi(u).
\end{align*}
In Propositions \ref{phiupos} we show that if $\phi(u)>0$ then $\text{MI}^\phi(S)=\text{MI}(S)$ and in Propositions  \ref{phiuneg} and \ref{phiuzero} we show that if $\phi(u)<0$ or $\phi(u)=0$ then $\text{MI}^\phi(S)=\text{MI}(S)-1$.
	Therefore the equivalence between (1) and (2) follows from Propositions  \ref{phiupos}, \ref{phiuneg}, and \ref{phiuzero} below. 
\end{proof}
\begin{proposition}
	\label{phiupos}
	If $\phi(u)>0$, then $\text{MI}^\phi(S)=\text{MI}(S)$. \end{proposition}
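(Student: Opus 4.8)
The plan is to reduce everything to the single identity $\phi(v)=S(u,v)$ and then to split off the positive $S$-direction spanned by $u$. Since $\mathbf{S}u=\bar\phi$ in this case, for every $v\in H$ we have $\phi(v)=(\bar\phi,v)=(\mathbf{S}u,v)=S(u,v)$; in particular $\text{Ker}(\phi)=\{v:S(u,v)=0\}$ is precisely the $S$-perpendicular complement of $u$, and $\phi(u)=S(u,u)>0$ shows that $u$ is non-isotropic. By Lemma \ref{atmost1} we already know $\text{MI}^\phi(S)\le \text{MI}(S)$, so the whole task is to produce a negative-definite subspace of $\text{Ker}(\phi)$ whose dimension equals $\text{MI}(S)$.

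If $\text{MI}(S)=0$ the claim is immediate, so assume $m:=\text{MI}(S)>0$ and fix a maximal subspace $W$ with $\dim W=m$ on which $S(\cdot,\cdot)$ is negative definite. The key construction is the $S$-orthogonal projection away from $u$, namely the linear map $P\colon W\to \text{Ker}(\phi)$ defined by $P(v)=v-\frac{\phi(v)}{\phi(u)}\,u$; the hypothesis $\phi(u)>0$ guarantees this is well defined, and it lands in $\text{Ker}(\phi)$ since $\phi(P(v))=\phi(v)-\phi(v)=0$. First I would check that $P$ is injective on $W$: if $P(v)=\vec 0$ then $v$ is a scalar multiple of $u$, but any nonzero multiple of $u$ has $S(\cdot,\cdot)>0$, contradicting $S(v,v)<0$ for $\vec 0\neq v\in W$, forcing $v=\vec 0$. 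Hence $\dim P(W)=m$.

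The decisive computation is then the quadratic form on the image. Using $S(u,v)=\phi(v)$ and $S(u,u)=\phi(u)$, expanding $S(P(v),P(v))$ collapses to
\[S(P(v),P(v))=S(v,v)-\frac{\phi(v)^2}{\phi(u)}.\]
Because $\phi(u)>0$, the subtracted term is non-negative, so $S(P(v),P(v))\le S(v,v)<0$ for every $\vec 0\neq v\in W$. Thus $S(\cdot,\cdot)$ is negative definite on the $m$-dimensional subspace $P(W)\subset \text{Ker}(\phi)$, giving $\text{MI}^\phi(S)\ge m$ and hence equality. Equivalently, $P(W)$ exhibits a maximal negative-definite subspace lying inside $\text{Ker}(\phi)$, so $\phi$ is not $S$-critical, consistent with the already-established equivalence of (1) and (3).

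I expect no serious obstacle here; the only points requiring care are the injectivity of $P$ on $W$—which is exactly where the sign $\phi(u)>0$ (equivalently, $u$ being a positive, non-isotropic direction) enters—and the bookkeeping that every $u$ solving $\mathcal{S}u=\phi$ yields the same value $\phi(u)$, so that the sign hypothesis is unambiguous. The remaining work is the routine expansion of $S(P(v),P(v))$ recorded above.
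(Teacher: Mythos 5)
Your proof is correct and is essentially the paper's argument in different clothing: the image $P(W)$ of your $S$-orthogonal projection along $u$ is exactly the subspace $W_2=\text{Ker}(\phi\mid_{\text{span}(W,u)})$ that the paper constructs, and your expansion $S(P(v),P(v))=S(v,v)-\phi(v)^2/\phi(u)$ is the same computation the paper performs after substituting the constraint $\phi(ku+w)=0$. No gaps; the injectivity check and the observation that $\phi(u)$ is independent of the choice of $u$ are handled correctly.
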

\begin{proof} 
	Let $W$ be a maximal space on which $S(\cdot, \cdot)$ is negative definite. That is,
	\[ \text{dim}(W)=\text{MI}(S).\]
	We have
	\[S(u,u)=(\mathcal{S}u)u=\phi(u)>0.\]
	Therefore $u\notin W$. Let $W_1=\text{span}(W,u)$ and $W_2=\text{Ker}(\phi\mid_{W_1})$. It is readily checked that 
	\[ \text{dim}(W_2)=\text{dim}(W_1)-1=\text{MI}(S).\]
	Furthermore, $v\in W_2$ if and only if $v=ku+w$ for $w\in W$ and $\phi(ku+w)=0$. We calculate
	\begin{align*} 
	S(v,v) &=k^2S(u,u)+S(w,w)+2kS(u,w)\\
	&= k^2 \phi(u)+S(w,w)+2k\phi(w)\\
	&=-k^2\phi(u)+S(w,w)<0. \\
	\end{align*}
	The result then follows.
\end{proof}

So it remains to prove Propositions \ref{phiuneg} and \ref{phiuzero}. For preparation, we observe several simple results. It is noted that $u$ is generally not unique as one can replace it by $u+v$ for any $v\in \text{Ker}(\mathcal{S})$. However, $\phi(u)$ is unique as 
\[\phi(u+v)=S(u,u+v)=S(u,u).\] 
Also, we have, for $v\in \text{Ker}(\phi)$
\begin{align*}
S(u,v) &= (\mathcal{S}u)(v)=\phi(v)=0.
\end{align*}
Hence $u$ is $S$-perpendicular to $\text{Ker}(\phi)$.

\begin{lemma}
	\label{containingu}
	For any $u\in H$ such that $S(u,u)<0$, there is a maximal space $W$, $u\in W$, on which $S(\cdot,\cdot)$ is negative definite.
\end{lemma}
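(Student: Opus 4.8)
The plan is to construct $W$ explicitly: isolate the prescribed vector $u$ as a one-dimensional negative-definite piece, decompose what remains, and then check that nothing is lost by forcing $u$ into the subspace. First I would set $L=\text{span}(u)$. Since $S(u,u)<0$, the line $L$ is negative definite and in particular contains no isotropic vector, so the splitting recorded in Section~\ref{prelim} gives the $S$-orthogonal direct sum
\[H = L\oplus L^\perp.\]
Restricting $S(\cdot,\cdot)$ to the Hilbert space $L^\perp$ and applying the fundamental decomposition (Theorem~\ref{funddecom}) yields
\[L^\perp = N_-\oplus N_0\oplus N_+,\]
with the three summands mutually perpendicular and $S$-perpendicular, and $S(\cdot,\cdot)$ negative definite on $N_-$, zero on $N_0$, and positive definite on $N_+$.

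Next I would take $W=L\oplus N_-$ and claim it is the desired subspace. It contains $u$ by construction. Because $L$ and $N_-$ are $S$-orthogonal and $S(\cdot,\cdot)$ is negative definite on each, it is negative definite on $W$: writing $v=au+w$ with $w\in N_-$, the cross term $S(u,w)$ vanishes, so $S(v,v)=a^2 S(u,u)+S(w,w)\le 0$, with equality only for $v=\vec{0}$. Having no isotropic vector, $W$ admits the splitting $H=W\oplus W^\perp$. I would then identify $W^\perp$ precisely: the grouping $H=(L\oplus N_-)\oplus(N_0\oplus N_+)=W\oplus(N_0\oplus N_+)$ exhibits $N_0\oplus N_+$ as a complement of $W$ contained in $W^\perp$ (every element of $L^\perp$ is $S$-perpendicular to $u$, and $N_0\oplus N_+$ is $S$-perpendicular to $N_-$), and since two nested complements of a fixed subspace coincide, $W^\perp = N_0\oplus N_+$, on which $S(\cdot,\cdot)\ge 0$.

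It then remains to prove maximality, i.e. $\dim W=\text{MI}(S)$. I would show that every subspace $U$ on which $S(\cdot,\cdot)$ is negative definite injects into $W$ under the $S$-orthogonal projection $P\colon H\to W$ along $W^\perp$: if $\vec{0}\neq x\in U\cap W^\perp$, then $S(x,x)<0$ because $x\in U$, while $S(x,x)\ge 0$ because $x\in W^\perp$, a contradiction; hence $U\cap W^\perp=\{\vec{0}\}$, $P|_U$ is injective, and $\dim U\le\dim W$. Thus no negative-definite subspace exceeds $W$, so $\dim W=\text{MI}(S)$ and $W$ is maximal. The step I expect to demand the most care is exactly this maximality argument: exhibiting a negative-definite $W$ containing $u$ is routine, and the real content is ruling out that inserting the prescribed $u$ costs a dimension. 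The identification $W^\perp=N_0\oplus N_+$ together with the sign condition $S(\cdot,\cdot)\ge 0$ there is precisely what forces $P|_U$ to be injective, so the bookkeeping of $S$-orthogonality underlying that identification is where the work lies.
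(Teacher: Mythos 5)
Your proof is correct, but it travels a different road from the paper's. The paper starts from the global decomposition $H=H_-\oplus H_0\oplus H_+$ and takes the $S$-projection $u_0$ of $u$ onto $H_-$; since $S(u,u)<0$ forces $u_0\neq\vec 0$, one can split $H_-=\text{span}(u_0)\oplus W_1$ and replace $u_0$ by $u$ to get $W=\text{span}(u,W_1)$, whose dimension is visibly $\dim H_-=\text{MI}(S)$, so maximality comes for free from the dimension count. You instead split off $L=\text{span}(u)$ first, apply Theorem \ref{funddecom} to the $S$-orthogonal complement $L^\perp$ to produce $N_-$, and set $W=L\oplus N_-$; the price is that $\dim N_-$ is not a priori $\text{MI}(S)-1$, so you must supply the separate maximality argument via the identification $W^\perp=N_0\oplus N_+$ and the injectivity of the $S$-orthogonal projection on any negative-definite subspace. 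That extra step is carried out correctly (the nested-complements argument and the sign condition on $W^\perp$ both check out, and $\dim N_-\le\dim H_-<\infty$ under the paper's standing finiteness assumption), and it has the virtue of making explicit the principle the paper leaves implicit when it asserts that all maximal negative-definite subspaces share the dimension $\dim H_-$; the paper's version is simply shorter because it never leaves $H_-$.
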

\begin{proof}
	Since $H_{-}$ has no isotropic vector, it is possible to construct $u_0$, the $S$-projection of $u$ on $H_{-}$. Since $S(\cdot,\cdot)$ is negative definite on $H_{-}$, we see that $H_{-}$ decomposes into 
	\[H_{-}=\text{span}(u_0)\oplus W_1,\]
	for $u_0$ $S$-perpendicular to $W_1$. As a consequence, $u$ is $S$-perpendicular to $W_1$ and $W:=\text{Span}(u,W_1)$ is a maximal space.  
\end{proof}

\begin{lemma}
	\label{maximalnotinkernel}
	For any nontrivial continuous linear functional $\phi$, if $\text{MI}(S)>0$ then there is a maximal space $W$ on which $S(\cdot,\cdot)$ is negative definite and $W\not\subset \text{Ker}\phi$.
\end{lemma}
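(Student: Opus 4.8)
The plan is to reduce the statement to the existence of a \emph{single} vector. Specifically, it suffices to produce one $u\in H$ with $S(u,u)<0$ and $\phi(u)\neq 0$: once such a $u$ is in hand, Lemma \ref{containingu} furnishes a maximal subspace $W\ni u$ on which $S(\cdot,\cdot)$ is negative definite, and since $\phi(u)\neq 0$ we have $u\notin \text{Ker}\phi$, whence $W\not\subset \text{Ker}\phi$, exactly as required.

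To build this $u$, I would start from the fundamental decomposition $H=H_{-}\oplus H_0\oplus H_+$ of Theorem \ref{funddecom}. Because $\text{MI}(S)=\text{dim}(H_{-})>0$, I can pick a nonzero $w\in H_{-}$, which satisfies $S(w,w)<0$. If $\phi(w)\neq 0$ already, then $u=w$ works and there is nothing more to do. Otherwise $\phi(w)=0$, and I would exploit the nontriviality of $\phi$ to choose some $x\in H$ with $\phi(x)\neq 0$.

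Next I would perturb $w$ in the direction of $x$. Setting $u_t=w+tx$, continuity (equivalently, boundedness) of $S(\cdot,\cdot)$ gives $S(u_t,u_t)=S(w,w)+2t\,S(w,x)+t^2 S(x,x)\to S(w,w)<0$ as $t\to 0$, so $S(u_t,u_t)<0$ for all sufficiently small $t$. On the other hand $\phi(u_t)=\phi(w)+t\phi(x)=t\phi(x)$, which is nonzero for every $t\neq 0$. Fixing a small nonzero $t$ then yields a vector $u=u_t$ with both required properties, and the reduction above finishes the argument.

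The proof has essentially no hard step: the only thing to check is that the perturbation does not destroy negativity, and this is immediate from continuity of the form since $S(w,w)<0$ is a \emph{strict} inequality, so a small enough perturbation stays strictly negative. If one prefers to avoid invoking Theorem \ref{funddecom}, the same argument works starting from any maximal negative definite subspace $W_0$ (nonempty because $\text{MI}(S)>0$) and choosing $w\in W_0\setminus\{\vec{0}\}$; the remainder is identical.
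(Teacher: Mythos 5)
Your proof is correct and follows essentially the same strategy as the paper: reduce to producing a single vector $u$ with $S(u,u)<0$ and $\phi(u)\neq 0$, invoke Lemma \ref{containingu}, and handle the case $\phi(w)=0$ by examining the quadratic $S$ along a line in the plane spanned by the negative vector and a vector outside $\text{Ker}\phi$. The only (immaterial) difference is that you take a small perturbation $w+tx$ of the negative vector so that the $S(w,w)<0$ term dominates, whereas the paper takes $kx+\bar\phi$ with $k$ large so that the $k^2S(x,x)<0$ term dominates; both yield the required vector.
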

\begin{proof}
	Due to Lemma \ref{containingu}, it suffices to find some $u\notin  \text{Ker}\phi$ such that $S(u,u)<0$. Recall, for all $v\in H$
	\[\phi(v)=(v, \bar{\phi}).\]
	Since $\phi$ is nontrivial, $\bar{\phi} \neq \vec{0}$. 
	Next, for $\text{MI}(S)>0$, there is $x$ such that
	\begin{align*}
	S(x,x) &<0.
		\end{align*}
	if $\phi(x)\neq 0$ then we are done. Otherwise, $\phi(x)=0$, one considers the quadratic function  
	\begin{align*}
	f(k) &=S(kx+\bar{\phi}, kx+\bar{\phi})\\
 &	=k^2 S(x,x)+2k S(x,\bar{\phi})+S(\bar{\phi},\bar{\phi})\rightarrow -\infty \text{ as } k \rightarrow \infty. 
	\end{align*}   
	Choose $u=kx+\bar\phi$, $\phi(u)=(\bar\phi, kx+\bar\phi)=(\bar\phi,\bar\phi)>0$, for sufficiently large $k$ and the proof is finished.
\end{proof}

\begin{proposition}
	\label{phiuneg}
	If $\phi(u)<0$ then $\text{MI}^\phi(S)=\text{MI}(S)-1$.
\end{proposition}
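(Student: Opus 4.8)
The plan is to invoke Lemma \ref{atmost1}, which already pins $\text{MI}^\phi(S)$ to one of the two values $\text{MI}(S)$ or $\text{MI}(S)-1$, and then to exclude the larger value by a direct dimension-counting contradiction. The single ingredient I need is the behaviour of $u$ relative to $\text{Ker}(\phi)$, which is already recorded above: since $\mathcal{S}u=\phi$, for every $v\in H$ one has $\phi(v)=(\mathcal{S}u)(v)=S(u,v)$; in particular $\phi(u)=S(u,u)<0$, so $u\notin\text{Ker}(\phi)$, while for every $v\in\text{Ker}(\phi)$ we get $S(u,v)=\phi(v)=0$, i.e. $u$ is $S$-perpendicular to $\text{Ker}(\phi)$.

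Next I would argue by contradiction. Suppose $\text{MI}^\phi(S)=\text{MI}(S)$. Then there is a maximal subspace $\tilde W\subseteq\text{Ker}(\phi)$, with $\dim\tilde W=\text{MI}(S)$, on which $S(\cdot,\cdot)$ is negative definite. Because $u\notin\text{Ker}(\phi)\supseteq\tilde W$, the space $W_1:=\text{span}(\tilde W,u)$ has dimension $\text{MI}(S)+1$. I then compute $S$ on a general element $v=w+cu$ with $w\in\tilde W$: using $S(u,w)=\phi(w)=0$ the cross term drops out, leaving
\[S(v,v)=S(w,w)+c^2S(u,u).\]
Both summands are non-positive — $S(w,w)<0$ for $w\neq\vec{0}$ by negative definiteness on $\tilde W$, and $c^2S(u,u)\leq 0$ since $S(u,u)<0$ — and they cannot vanish simultaneously unless $v=\vec{0}$. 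Hence $S(v,v)<0$ for all $\vec{0}\neq v\in W_1$, so $S(\cdot,\cdot)$ is negative definite on a subspace of dimension $\text{MI}(S)+1$, contradicting the definition of $\text{MI}(S)$.

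Therefore $\text{MI}^\phi(S)\neq\text{MI}(S)$, and combined with Lemma \ref{atmost1} this forces $\text{MI}^\phi(S)=\text{MI}(S)-1$. The only delicate point is the last implication in the computation: one must check that $S(v,v)$ is \emph{strictly} negative for every nonzero $v$, which requires separating the cases $w\neq\vec{0}$ and $w=\vec{0},\,c\neq 0$ rather than merely noting that a sum of two non-positive terms is non-positive. Everything else is bookkeeping, and no appeal to the hypothesis that $\phi$ is not a pure limit point is needed here — that assumption was already consumed in reducing to Case (ii), where $\phi\in\text{ran}(\mathcal{S})$.
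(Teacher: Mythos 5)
Your proof is correct, but it takes a genuinely different route from the paper's. The paper constructs the answer explicitly: it invokes Lemma \ref{containingu} to produce a maximal negative definite subspace $W$ \emph{containing} $u$, sets $W_1=\text{Ker}(\phi\mid_W)$ (of dimension $\text{MI}(S)-1$, since $u\in W$ is $S$-perpendicular to $W_1$ and $\phi(u)\neq 0$), and then proves via the decomposition $H=\text{span}(u)\oplus W_1\oplus W^\perp$ that $W_1$ is already maximal inside $\text{Ker}(\phi)$. You instead exclude the alternative value allowed by Lemma \ref{atmost1}: assuming a negative definite $\tilde W\subseteq\text{Ker}(\phi)$ of full dimension $\text{MI}(S)$, you adjoin $u$ and use $S(u,w)=\phi(w)=0$ for $w\in\tilde W$ together with $S(u,u)=\phi(u)<0$ to manufacture a negative definite subspace of dimension $\text{MI}(S)+1$, which is absurd. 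Your argument is shorter and needs less machinery --- no Lemma \ref{containingu}, no $S$-orthogonal decomposition of $H$, only Lemma \ref{atmost1} and the identity $S(u,\cdot)=\phi(\cdot)$; it is in fact the mirror image of the paper's proof of Proposition \ref{phiupos}, where adjoining $u$ to a maximal subspace is used to show the index does \emph{not} drop. What the paper's longer argument buys is an explicit maximal negative definite subspace $W_1\subset\text{Ker}(\phi)$, rather than a pure counting statement. Your closing observations are also accurate: the strictness check in $S(v,v)=S(w,w)+c^2S(u,u)$ does require splitting into the cases $w\neq\vec{0}$ and $c\neq 0$, and the pure-limit-point hypothesis plays no role once one is in the case $\phi\in\text{ran}(\mathcal{S})$.
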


\begin{proof}
	Since
	\[S(u,u)=(\mathcal{S}u)u=\phi(u)<0,\]
	by Lemma \ref{containingu}, there is a maximal space $W$ containing $u$ such that $S(\cdot,\cdot)$ is negative definite on $W$. Let $W_1=\text{Ker}(\phi\mid_{W})$. By the discussion above $u$ is $S$-perpendicular to $W_1$. Thus,  
	\begin{align*}
 W &=\text{span}(u)\oplus W_1,\\
 \text{dim}(W_1) &=\text{dim}(W)-1\\
 &=\text{MI}(S)-1.
	\end{align*}
	Consequently, 
	\[H=\text{span}(u)\oplus W_1\oplus W^\perp.\]
	The direct product is with respect to $S$-perpendicularity. 
	
	\textbf{Claim:} $W_1$ is a maximal subspace inside $\text{Ker}(\phi)$ such that $S(\cdot,\cdot)$ is negative definite on $W_1$. 
	
	\textbf{Proof of the claim.} The claim is proved by contradiction. Suppose that the claim is false. Then there exist an  element $f\in \text{Ker}(\phi)$ which is  $S$-perpendicular to $W_1$, and $S(f,f)<0$. As $f$ is $S$-perpendicular to $W_1$, by the decomposition above, we have
	\[ f=ku+f_1,\] 
	for $f_1\in W^\perp$ and some constant $k$. Then we calculate
	\begin{align*}
	0 &=S(u,f_1) \\
	&=(\mathcal{S}u) (f_1)\\
	&=\phi(f_1).
	\end{align*}
	Thus, $f_1\in \text{Ker}(\phi)$. Since $f=ku+f_1\in \text{Ker}(\phi)$, $ku \in \text{Ker}(\phi)$. Recall that $\phi(u)<0$, we conclude $k=0$. Then, due to $f_1\in W^\perp$,
	\[S(f,f)=S(f_1,f_1)\geq 0.\]
	That is a contradiction. So  the proof is finished. 
\end{proof}
\begin{proposition}
	\label{phiuzero}
	If $\vec{0}\neq \phi$ and $\phi(u)=0$ then $\text{MI}^\phi(S)=\text{MI}(S)-1$.
\end{proposition}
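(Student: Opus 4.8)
The plan is to reduce to the $S$-criticality characterization already established. In Case (ii) we have $\mathcal{S}u=\phi$, so $S(u,u)=\phi(u)=0$ and $u$ is isotropic, while $u$ is $S$-perpendicular to $\text{Ker}(\phi)$ by the computation preceding this proposition. Since $\phi\neq\vec{0}$ we have $\bar{\phi}\neq\vec{0}$, and $\mathbf{S}u=\bar{\phi}$ then forces $u\notin\text{Ker}(\mathbf{S})=H_0$. Decomposing $u=u_-+u_0+u_+$ along $H=H_{-}\oplus H_0\oplus H_+$, the identity $S(u_-,u_-)+S(u_+,u_+)=S(u,u)=0$ together with $u\notin H_0$ forces both $u_-\neq\vec{0}$ and $u_+\neq\vec{0}$; in particular $H_{-}\neq\{\vec{0}\}$, so $\text{MI}(S)\geq1$. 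By Lemma \ref{atmost1}, $\text{MI}^\phi(S)$ equals $\text{MI}(S)$ or $\text{MI}(S)-1$, so it suffices to exclude the former; equivalently, by the equivalence of (1) and (3) proved at the start of the proof of Theorem \ref{Scriticality}, it suffices to show that $\phi$ is $S$-critical.

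The key ingredient I would isolate is a rigidity statement: if $W$ is a maximal subspace on which $S(\cdot,\cdot)$ is negative definite and $x$ is both isotropic and $S$-perpendicular to $W$, then $x\in H_0$. Indeed, $W$ contains no isotropic vector, so $H=W\oplus W^\perp$, and any such $x$ lies in $W^\perp$, on which $S(\cdot,\cdot)$ is positive semidefinite (its index must be zero, else adjoining a negative vector of $W^\perp$ to $W$ would contradict maximality) with radical exactly $H_0$, since $x\in W^\perp$ is $S$-perpendicular to all of $H$ precisely when $x\in\text{Ker}(\mathbf{S})$. Hence $S(x,x)=0$ places $x$ in the radical of $S\mid_{W^\perp}$, i.e. $x\in H_0$. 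This sharpens the remark after Definition \ref{strongMI} that any vector $S$-perpendicular to a maximal negative definite subspace satisfies $S(x,x)\geq0$.

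The conclusion is then immediate. If $\text{MI}^\phi(S)=\text{MI}(S)$, there is a subspace $W\subset\text{Ker}(\phi)$ with $\dim W=\text{MI}(S)$ on which $S(\cdot,\cdot)$ is negative definite, so $W$ is a maximal negative definite subspace of $H$. Since $u$ is $S$-perpendicular to $\text{Ker}(\phi)\supset W$ and isotropic, the rigidity statement gives $u\in H_0$, contradicting $u\notin H_0$. Hence no maximal negative definite subspace lies in $\text{Ker}(\phi)$, i.e. $\phi$ is $S$-critical (recall $\text{MI}(S)\geq1$, so these subspaces are nontrivial), and therefore $\text{MI}^\phi(S)=\text{MI}(S)-1$. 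The main obstacle is the rigidity step: the crux is identifying the radical of $S(\cdot,\cdot)$ on $W^\perp$ with the global radical $H_0$, which is exactly what upgrades the bare inequality $S(u,u)\geq0$ into the needed contradiction. Once that identification is in place, the remainder is bookkeeping.
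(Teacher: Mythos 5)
Your proof is correct, but it takes a genuinely different route from the paper's. The paper first secures $\text{MI}(S)\geq 1$, then invokes Lemma \ref{maximalnotinkernel} to produce a maximal negative definite subspace $W\not\subset\text{Ker}(\phi)$, sets $W_1=\text{Ker}(\phi\mid_W)$, and proves $W_1$ is maximal inside $\text{Ker}(\phi)$ by a substitution trick: using $S(u,u_0)=\phi(u_0)\neq 0$ to replace the spanning vector $u_0$ by $u$ modulo $W^\perp$, so that any candidate $f=ku+f_2$ with $f,u\in\text{Ker}(\phi)$ forces $\phi(f_2)=0$ and $S(f,f)=S(f_2,f_2)\geq 0$. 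You instead rule out the alternative $\text{MI}^\phi(S)=\text{MI}(S)$ directly, via a rigidity lemma: an isotropic vector $S$-perpendicular to a maximal negative definite subspace $W$ lies in $W^\perp$, where $S$ is positive semidefinite by maximality, so by the Cauchy--Schwarz inequality for semidefinite forms it lies in the radical of $S\mid_{W^\perp}$, which (because $H=W\oplus W^\perp$) coincides with $H_0=\text{Ker}(\mathcal{S})$; since $\mathcal{S}u=\phi\neq\vec{0}$ gives $u\notin H_0$, no maximal negative definite subspace can sit inside $\text{Ker}(\phi)$. Your route is arguably cleaner in that it avoids constructing $W_1$ and the $u_0\mapsto u$ substitution, and it isolates a reusable fact (sharpening the paper's remark that $S(x,x)\geq 0$ for $x$ $S$-perpendicular to a maximal subspace into: equality forces $x\in H_0$); the one step you should make explicit is the semidefinite Cauchy--Schwarz argument showing $S(x,x)=0$ implies $S(x,y)=0$ for all $y\in W^\perp$, since that is the crux of the rigidity claim. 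The paper's route, by contrast, produces the maximal negative definite subspace of $\text{Ker}(\phi)$ explicitly, which is closer in spirit to the parallel Proposition \ref{phiuneg}.
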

\begin{proof}
	First, we observe that $\text{MI}(S)\geq 1$. Suppose the claim is false. Then $H_{-}=\emptyset$ from Theorem \ref{funddecom}. Then, $u\in H_0$ and 
	\[\phi=\mathcal{S}(u)=\vec{0},\] 
	which is contradiction. So the claim is true. 
	
	By Lemma \ref{maximalnotinkernel}, there is a maximal subspace $W$ on which $S(\cdot, \cdot)$ is negative definite and $W\not\subset \text{Ker}(\phi)$. Let $W_1=\text{Ker}(\phi\mid_{W})$ then $W_1$ has codimension one. Thus, there is a nonzero vector $u_0\in W$ such that it is $S$-perpendicular to $W_1$, $\phi(u_0)\neq 0$, and
	\begin{align*}
	W &=\text{span}(u_0)\oplus W_1,\\
	\text{dim}(W_1) &=\text{dim}(W)-1\\
	&=\text{MI}(S)-1.
	\end{align*}
	Consequently, 
	\[H=\text{span}(u_0)\oplus W_1\oplus W^\perp.\]
	
	\textbf{Claim:} $W_1$ is a maximal subspace inside $\text{Ker}(\phi)$ such that $S(\cdot,\cdot)$ is negative definite on $W_1$. 
	
	\textbf{Proof of the claim.} The claim is proved by contradiction. If it were false then there would be $f\in \text{Ker}(\phi)$, such that $f$ is $S$-perpendicular to $W_1$, and $S(f,f)<0$. Since $f$ is $S$-perpendicular to $W_1$, by the decomposition above, 
	\[ f=k_0u_0+f_1,\]
	for some constant $k_0$ and $f_1\in W^\perp$. 
	 We also observe that
	 \[S(u,u_0)=(\mathcal{S}u)u_0=\phi(u_0)\neq 0. \] 
	 Let $v=u-\frac{S(u,u_0)}{S(u_0,u_0)}u_0$ then it is readily checked that $S(v,u_0)=0$ and $v$ is $S$-perpendicular to $W_1$. Therefore, $v\in W^\perp$ and, consequently,
	 \[f=ku+f_2,\]
	 for some constant $k$ and $f_2\in W^\perp$. We calculate
	 \begin{align*}
	 S(f,f) &= S(ku+f_2, ku+f_2)\\
	 &= k^2S(u,u)+2kS(u,f_2)+S(f_2,f_2)\\
	 &=k^2\phi(u)+2k\phi(f_2)+S(f_2,f_2).
	 \end{align*}
	 Since we have $u, f\in \text{Ker}(\phi)$ so $f_2\in \text{Ker}(\phi)$. Thus, for $f\in W^\perp$,
	 \[S(f,f)=S(f_2, f_2)\geq 0.\]
	 That is a contradiction and the claim is proved. The result then follows.  
\end{proof}

Now the S-criticality is characterized by Theorem \ref{Scriticality}. Theorems \ref{abstractMorse} and \ref{severalfunc} will follow immediately.  

\begin{proof} [Proof of Theorem \ref{abstractMorse}.] It follows from Theorem \ref{Scriticality} and Lemma \ref{atmost1}.
	
\end{proof}
Towards Theorem \ref{severalfunc}, let $\phi_1,...\phi_n$ be non-trivial continuous linear functionals. Via the Riesz representation theorem, each $\phi_i$ corresponds to $\bar\phi_i\in H$ such that, for every $u\in H$, 
\[\phi_i(u)=(\bar\phi_i, u).\]

\begin{lemma}
	$\phi_1,...\phi_n$ are linearly independent if and only if, for each $i$, $\phi_i$ is a non-trivial linear functional on $\cap_{j=1, j\neq i}^n\text{Ker}\phi_j$. 
\end{lemma}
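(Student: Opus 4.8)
The plan is to prove both implications by contraposition, with the whole argument resting on a single classical fact from linear duality: \emph{if a linear functional vanishes on the common kernel of finitely many linear functionals, then it lies in their linear span}. Throughout, write $K_i := \bigcap_{j\neq i}\text{Ker}(\phi_j)$ for the subspace on which we test $\phi_i$.

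For the direction asserting that linear independence forces each restriction to be non-trivial, I would argue contrapositively. Suppose that for some $i$ the restriction $\phi_i\mid_{K_i}$ is the zero functional, i.e. $K_i\subset \text{Ker}(\phi_i)$. The duality fact then yields constants $c_j$ with $\phi_i=\sum_{j\neq i}c_j\phi_j$, contradicting linear independence. To keep the excerpt self-contained I would include a one-line proof of the duality fact: consider the map $T\colon H\to\mathbb{R}^{n-1}$ given by $T(v)=(\phi_j(v))_{j\neq i}$; its kernel is exactly $K_i\subset\text{Ker}(\phi_i)$, so $\phi_i$ factors as $\phi_i=\Lambda\circ T$ for a linear $\Lambda$ defined on $\text{ran}(T)$, and extending $\Lambda$ to all of $\mathbb{R}^{n-1}$ (possible since the target is finite-dimensional) expresses $\Lambda$, and hence $\phi_i$, as the desired combination.

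For the converse, again by contraposition, suppose the $\phi_i$ are linearly dependent, so there is a non-trivial relation $\sum_i c_i\phi_i=0$ with some $c_{i_0}\neq 0$. Solving for that index gives $\phi_{i_0}=-c_{i_0}^{-1}\sum_{j\neq i_0}c_j\phi_j$. Evaluating both sides at any $v\in K_{i_0}$ annihilates every term on the right, so $\phi_{i_0}(v)=0$; thus $\phi_{i_0}\mid_{K_{i_0}}$ is trivial, contradicting the hypothesis. This half is elementary and uses no duality.

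The only real content is the duality fact invoked in the forward direction; once that is recorded, both implications are immediate, so I expect no genuine obstacle beyond stating that standard lemma cleanly. I note that the Riesz correspondences $\bar\phi_i$ introduced just before the statement are not needed here, although one could rephrase the duality fact equivalently in terms of the finite-dimensional span of $\{\bar\phi_i\}$ inside $H$.
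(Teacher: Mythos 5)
Your proof is correct, but it takes a different route from the paper's. Both arguments reduce the statement to the equivalence ``$\phi_i$ vanishes on $K_i=\cap_{j\neq i}\mathrm{Ker}(\phi_j)$ if and only if $\phi_i\in\mathrm{span}(\phi_j : j\neq i)$,'' but you establish this equivalence by the purely algebraic duality lemma (factoring $\phi_i$ through the evaluation map $T\colon H\to\mathbb{R}^{n-1}$ and extending off $\mathrm{ran}(T)$), whereas the paper uses the Hilbert-space structure: it passes to the Riesz representatives $\bar\phi_i$ and identifies the orthogonal complement $\bigl(\cap_{j\neq i}\mathrm{Ker}(\phi_j)\bigr)^\perp=\mathrm{span}(\bar\phi_j,\ j\neq i)$, which rests on the closedness of finite-codimensional subspaces and of finite-dimensional spans. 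Your version is more elementary and more general --- it needs neither continuity of the functionals nor an inner product, so it proves the lemma in an arbitrary vector space --- and you also spell out the easy converse direction, which the paper dispatches with ``the proof then follows.'' What the paper's approach buys is economy within its own framework: the $\bar\phi_i$ and the orthogonal-complement identity are already in play throughout Section 3, so the lemma falls out in two lines without introducing the quotient/extension argument. Either proof is acceptable; yours would be the preferable one if the lemma were to be stated at the level of generality of the introduction (bilinear forms on plain vector spaces).
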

\begin{proof}
	In this proof, all perpendicularity is with respect to the Hilbert space inner product. By the closeness of a subspace of finite codimension in a Hilbert space,
	\[\big(\cap_{j=1, j\neq i}^n\text{Ker}\phi_j\big)^\perp= \text{span}(\bar\phi_j, j\neq i). \]
	Thus, $\phi_i$ is a trivial linear functional on $\cap_{j=1, j\neq i}^n\text{Ker}\phi_j$ if and only if $\bar\phi_i\in \text{span}(\bar\phi_j, j\neq i)$. 
	The proof then follows. 
	
\end{proof}

\begin{proof}[Proof of Theorem \ref{severalfunc}.]
	 Since  $\phi_1,\phi_2, \cdots, \phi_n$ are linearly independent, each is non-trivial on $H$. Furthermore, $u_1, u_2,...u_n$ are also linear independent. As the matrix $S(u_i,u_j)$ is symmetric, it can be diagonalized by a basis $u_1',...u_n'$ so that for $i\neq j$
	\[ S(u_i',u_j')=0.\]
	Note that $u_1, u_2,...u_n$ and $u_1',...u_n'$ span the same subspace. By linear algebra, for $\mathcal{S}u_i'=\phi_i'$, 
	\[\cap_{i=1}^n\text{Ker}\phi_i=\cap_{i=1}^n\text{Ker}\phi_i'.\]
	Then
	\[\text{MI}^{\phi_1,...,\phi_n}(S)=\text{MI}^{\phi_1',...,\phi_n'}(S)=(...(\text{MI}^{\phi_1'})...)^{\phi_n'}(S)\]
	Since $u_i'$ and $u_j'$ are $S$-perpendicular for $i\neq j$, $\phi_i'(u_j')=S(u_i', u_j')=0$. Therefore, 
	\[u_j'\in \cap_{i=1, i\neq j}^n\text{Ker}\phi_i'.\]
 Since  $\phi_1,\phi_2, \cdots, \phi_n$ are linearly independent, so are $\phi_1',\phi_2', \cdots, \phi_n'$. 
 
 	We next proceed by induction. The statement is true for $n=1$ by Theorem \ref{abstractMorse}. Suppose then it is true for $n=k$ and we consider the case $n=k+1$. Let \[H_k=\cap_{i=1}^k\text{Ker}\phi_i'\]
 then $H_k$ is subspace of finite co-dimension in a Hilbert space. Thus, $H_k$ is a Hilbert space. By observations above, $u_{k+1}'\in H_k$ and $\mathcal{S}u_{k+1}'=\phi_{k+1}'$. Also, as $\phi_1',\phi_2', \cdots, \phi_n'$ are linearly independent, $\phi_{k+1}'$ is a non-trivial continuous linear functional on $H_k$.  Applying Theorem \ref{abstractMorse} finishes the proof. 
 		
\end{proof}
\subsection{Nullity}
The purpose of this section is to prove Theorems \ref{abstractnullity} and \ref{severalfuncnullity}. Before giving their proof, we will collect useful results. Recall that the nullity is the dimension of the kernel of $\mathcal{S}$. Each element in the kernel is $S$-perpendicular to every element in the Hilbert space. So it makes sense, in this section, to consider the perpendicularity, projection, and direct sum with respect to the inner product. 

For a non-trivial continuous linear functional $\phi$, let 
\begin{align*}
H_1 &:= \text{Ker}\phi,\\
N &:= H_0=\text{Ker}(\mathcal{S}),\\
N_1 &:= (H_1)_0=\text{Ker}(\mathcal{S}\mid_{H_1}).
\end{align*} 
Via the Riesz representation theorem, each $\phi$ uniquely corresponds to $\bar{\phi}\in H$ such that, for all $v\in H$,
\[\phi(v)=(\bar{\phi}, v).\] 
\begin{lemma}
	\label{lemma1null}
	$v\notin N$ and $v\in N_1$ if and only if $\phi(v)=0$, $\mathbf{S}(v)=k\bar\phi$ for some non-zero constant $k$.
\end{lemma}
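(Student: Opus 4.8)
The plan is to translate the condition $v\in N_1$ into a statement about the self-adjoint operator $\mathbf{S}$ and the Riesz vector $\bar\phi$, working entirely with the Hilbert-space inner product $(\cdot,\cdot)$ as prescribed at the start of this subsection. The load-bearing preliminary step I would establish is a description of the orthogonal complement of $\text{Ker}\phi$: since $\phi$ is nontrivial we have $\bar\phi\neq\vec{0}$, and because $\phi(w)=(\bar\phi,w)$ the codimension-one closed subspace $H_1=\text{Ker}\phi$ equals $\{w:(\bar\phi,w)=0\}$, so that
\[ (\text{Ker}\phi)^\perp=\text{span}(\bar\phi), \]
where $\perp$ is taken in the inner product. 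Recall also $\text{Ker}(\mathbf{S})=\text{Ker}(\mathcal{S})=N$, so I may freely pass between the two operators.

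For the forward direction, I would assume $v\notin N$ and $v\in N_1$. Membership $v\in N_1\subset H_1$ yields $\phi(v)=0$ at once. Membership $v\in N_1=\text{Ker}(\mathcal{S}\mid_{H_1})$ means $S(v,w)=0$ for every $w\in\text{Ker}\phi$, i.e. $(\mathbf{S}v,w)=0$ for all such $w$; hence $\mathbf{S}v\in(\text{Ker}\phi)^\perp=\text{span}(\bar\phi)$, so $\mathbf{S}v=k\bar\phi$ for some scalar $k$. To see $k\neq 0$, observe that $k=0$ would force $\mathbf{S}v=\vec{0}$, placing $v\in\text{Ker}(\mathbf{S})=N$ and contradicting $v\notin N$.

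For the converse, I would assume $\phi(v)=0$ and $\mathbf{S}v=k\bar\phi$ with $k\neq 0$. Since $\bar\phi\neq\vec{0}$, we get $\mathbf{S}v\neq\vec{0}$, so $v\notin\text{Ker}(\mathbf{S})=N$. It then remains to verify $v\in N_1$: for any $w\in H_1=\text{Ker}\phi$,
\[ S(v,w)=(\mathbf{S}v,w)=k(\bar\phi,w)=k\,\phi(w)=0, \]
so $v$ is $S$-perpendicular to all of $H_1$ while lying in $H_1$, giving $v\in N_1$. The argument is essentially bookkeeping once the complement computation is in hand; the only point requiring care — the nearest thing to an obstacle — is confirming the non-vanishing of $k$ (which is precisely what encodes $v$ leaving $N$) together with keeping all perpendicularity in the inner product rather than the $S$-pairing, which is exactly why pinning down $(\text{Ker}\phi)^\perp=\text{span}(\bar\phi)$ at the outset is the decisive step.
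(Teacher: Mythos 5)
Your proof is correct and follows essentially the same route as the paper's: both directions hinge on the inner-product decomposition $H=\text{Ker}\phi\oplus\text{span}(\bar\phi)$, identifying $\mathbf{S}v\in(\text{Ker}\phi)^\perp=\text{span}(\bar\phi)$ in the forward direction and verifying $S(v,w)=k\phi(w)=0$ for $w\in\text{Ker}\phi$ in the converse. No gaps; your explicit attention to why $k\neq 0$ matches the paper's use of $v\notin N\Rightarrow\mathbf{S}v\neq\vec{0}$.
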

\begin{proof}
	If $\mathbf{S}(v)=k\bar\phi$, $0\neq k$, then immediately $v\notin N$. Since $\phi(v)=0$, $v\in H_1$. For every $u\in H_1$,
	\[S(v, u)=\mathcal{S}v(u)=(\mathbf{S}v, u)=k(\bar\phi, u)=0.\] 
	 Thus, $v\in N_1$. 
	 
	 For the other direction, let $v\in N_1$, $v\notin N$. Thus, $v\in H_1$ and $\phi(v)=0$. Consider, for every $u\in H_1$,
	\[0=S(v,u)=(\mathbf{S}v, u).\]
	Therefore, the projection (by the inner product) of $\mathbf{S}v$ on $H_1$ is $\vec{0}$. Since $v\notin N$, $\mathbf{S}v\neq \vec{0}$. As $H$ is a direct sum (by the inner product)of $H_1\oplus \text{span}(\bar\phi)$, the result follows.
\end{proof}

\begin{proof}[Proof of Theorem \ref{abstractnullity}] There are two cases. 
	
		{\bf Case (i).} If $\phi\in \overline{\text{ran}(\mathcal{S})}$, then $\bar\phi$ is $S$-perpendicular to $\text{Ker}(\mathcal{S})$ and also $H$-inner product perpendicular to $\text{Ker}(\mathcal{S})$. Therefore $\text{Ker}(\mathcal{S})\subset \text{Ker}(\phi)$, $N\subset N_1$, and $n^\phi(S)\geq n(S)$.
		
		By Lemma \ref{lemma1null}, the inequality is proper if and only if 
		there is $u\in H$ such that 
		\begin{align*}
		\mathcal{S}u &=\phi,\\
		\phi(u) &=0.
		\end{align*}
		$u$ is unique up to an addition of an element from $N$. As a consequence, in this case, \[n^\phi(S)= n(S)+1.\]
Otherwise, 
 \[n^\phi(S)= n(S).\] 
		   
		{\bf Case (ii).} If $\phi\not\in \overline{\text{ran}(\mathcal{S})}$. Then $\bar\phi=u+s$ for $\vec{0}\neq u\in N$ and $s\in  \overline{\text{ran}(\mathcal{S})}$. By Lemma \ref{lemma1null}, $N_1\subset N$. Furthermore, 
			\[\phi(u)=(u, u+s)=(u,u)>0.\]
		Thus, $u\not\in \text{Ker}\phi$ and, consequently, $u\not\in N_1$. So the inclusion $N_1\subset N$ is proper and by Lemma \ref{atmost1}, 
		 \[n^\phi(S)= n(S)-1.\] 
		
\end{proof}

\begin{proof}[Proof of Theorem \ref{severalfuncnullity}]
	We argue as in the proof of Theorem \ref{severalfunc} using Theorem \ref{abstractnullity} as the base case. 
\end{proof}

\section{An Index Formula for Manifolds with Boundaries}
In this section, using our abstract formulation, we prove a general index formula for manifolds with boundaries, recovering results from \cite{tran16index}. Let $\Sigma$ be a smooth, orientable Riemannian manifold with boundaries. Let $\nabla, \Delta$ be the covariant derivative and Laplace operator on $\Sigma$ respectively. Let $d\mu$ and $ds$ be the induced volume form on $\Sigma$ and $\partial\Sigma$ respectively. 

Generally, on such a manifold, a second variation of some functional is associated with the following structurally general bilinear form:  
\begin{equation*}
Q(u,v) = \int_{\Sigma}\Big(\left\langle{\nabla u,\nabla v}\right\rangle-p uv\Big) d\mu-\int_{\partial \Sigma}q uv ds, 
\end{equation*} 
for smooth functions $p, q$ determined by the geometry of $\Sigma$. Let
\[J:=\Delta+p\]
be the so-called Jacobi operator. Then, via integration by parts,
\[Q(u,v) = \int_{\Sigma}\Big(-(Ju)v\Big) d\mu+\int_{\partial \Sigma}(\nabla_\eta u- qu)v ds. \] 
Here $\eta$ is an out-ward conormal vector along $\partial\Sigma$. The index of this form, $\text{MI}(Q)$, is precisely the number of negative eigenvalues of a Robin boundary consideration:
\begin{equation}
\label{Robin}
\begin{cases}
{J}u &=-\lambda u \text{ on } \Sigma,\\
\nabla_\eta u &= q u \text{ on } \partial \Sigma.
\end{cases}
\end{equation}

In \cite{tran16index}, the first author shows that $\text{MI}(Q)$ can be precisely determined by data of simpler problems. First, one consider only variations fixing the boundary. The associated fixed boundary problem is given by the following Dirichlet consideration:  
\begin{equation}
\label{FBP}
\begin{cases}
{J}v &=-\delta v \text{ on } \Sigma,\\
v &= 0 \text{ on } \partial \Sigma.
\end{cases}
\end{equation}
The Dirichlet eigenvalues can be characterized variationally. Let $H^1_0(\Sigma)$ be the Sobolev with one derivative, $L^2$-norm, and zero trace (intuitively, zero on the boundary). Let $V_k\subset H^1_0(\Sigma)$ denote a $k$-dimensional subspace. Then
\begin{align*}\label{Dirichlet}
\delta_k &=\min_{V_k}\max_{\vec{0}\neq v\in V_k} \frac{\int_\Sigma |\nabla v|^2- p v^2}{\int_{\Sigma} v^2},
\end{align*}

The influence of the boundary is, then, captured by the Jacobi-Steklov problem. See \cite{AEKS14, AM12} for the PDE theoretical foundation of this consideration. The Steklov eigenvalue problem, associated with the Laplace operator instead of $J$, for free boundary minimal surfaces also received tremendous interests recently, for example, \cite{FS11, FS16, petrides19}.    

Suppose that $q\in C^\infty(\partial \Sigma)$ be a non-zero non-negative function. We consider:  
\begin{equation}
\label{Jacobi-Steklov}
\begin{cases}
{J}h &=0 \text{ on } \Sigma,\\
\nabla_\eta h &= \mu q h \text{ on } \partial \Sigma.
\end{cases}
\end{equation}
The $J$-Steklov eigenvalues can be characterized variationally. Let $V_k\subset H^1(\Sigma)\cap \text{Ker}(J)$ denote a $k$-dimensional subspace, then
\begin{align*}\label{QandS}
\mu_k &=\min_{V_k}\max_{0\neq h\in V_k} \frac{\int_\Sigma |\nabla h|^2- p h^2}{\int_{\partial\Sigma} q h^2},
\end{align*}

\begin{remark}
	$\text{Ker}(\mathbf{Q})$ is essentially the eigenspace of eigenvalue $0$ of (\ref{Robin}) and is also the eigenspace of eigenvalue $1$ of (\ref{Jacobi-Steklov}).
\end{remark}
The following result, a slight generalization of \cite{tran16index}[Theorem 3.3], basically relates $\text{MI}(Q)$, the number of negative eigenvalues of (\ref{Robin}) with spectral data of (\ref{FBP}) and (\ref{Jacobi-Steklov}). 

\begin{theorem}\label{indexdecompose}
	Let $(\Sigma, \partial \Sigma)$ be a smooth compact Riemannian manifold with boundary and $q\geq 0$, $q\not\equiv 0$. Then $\text{MI}(Q)$ is equal to  
	\[a+b.\]
	Here $a$ is the number of non-positive eigenvalues of (\ref{FBP}) counting multiplicity; $b$ is the number of eigenvalues smaller than $1$ of (\ref{Jacobi-Steklov}) counting multiplicity.  
\end{theorem}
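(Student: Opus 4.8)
The plan is to express the index $\text{MI}(Q)$ as a count involving the Dirichlet data $a$ and Jacobi-Steklov data $b$, realizing both contributions via the abstract constraint machinery of Section \ref{abstract}. First I would set up the correct Hilbert space: take $H = H^1(\Sigma)$ with an inner product adapted to $Q$, and recognize that the bilinear form $Q(u,v)$ is continuous and symmetric on $H$, with associated operator $\mathbf{Q}$ whose negative eigenspace has dimension $\text{MI}(Q)$ (this requires $Q$ to satisfy the finiteness/Legendre condition, which holds on a compact manifold since the relevant operator is a compact perturbation of the identity via the Rellich embedding). The key structural observation is that $H$ decomposes into the space of functions vanishing on the boundary, namely $H^1_0(\Sigma)$, together with a complementary piece controlled by the harmonic (in the $J$-sense) extensions. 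Concretely, any $u \in H^1(\Sigma)$ splits as $u = v + h$ where $v \in H^1_0(\Sigma)$ and $h \in \text{Ker}(J)$ is the $J$-harmonic extension of the trace of $u$; one checks this is a $Q$-orthogonal splitting, so that $Q(u,u) = Q(v,v) + Q(h,h)$.

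Given that $Q$-orthogonal splitting, the index decomposes additively: $\text{MI}(Q)$ equals the index of $Q$ restricted to $H^1_0(\Sigma)$ plus the index of $Q$ restricted to the $J$-harmonic subspace. The first summand is exactly $a$: on $H^1_0(\Sigma)$ the boundary integral drops out and $Q(v,v) = \int_\Sigma |\nabla v|^2 - pv^2$, whose negative-definite maximal dimension is, by the variational characterization \eqref{Dirichlet}, the number of non-positive Dirichlet eigenvalues $\delta_k \leq 0$ — here one must be slightly careful that the index counts strictly negative directions, so eigenvalue zero should be handled by noting it contributes to the kernel rather than the index, and the statement counts non-positive eigenvalues because the decomposition reassigns the zero modes. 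The second summand is $b$: on the $J$-harmonic subspace, $Q(h,h) = \int_\Sigma |\nabla h|^2 - ph^2 - \int_{\partial\Sigma} q h^2$, and writing this as $\int_{\partial\Sigma} q h^2 \big(\tfrac{\int |\nabla h|^2 - ph^2}{\int q h^2} - 1\big)$ shows $Q(h,h) < 0$ precisely when the Jacobi-Steklov Rayleigh quotient is less than $1$; by the min-max characterization \eqref{QandS} the maximal negative-definite dimension is the number of Steklov eigenvalues $\mu_k < 1$, which is $b$.

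The main obstacle, and where I expect the genuine work to lie, is justifying the $Q$-orthogonality of the Dirichlet/harmonic splitting together with the correct bookkeeping of the zero eigenvalues across the two pieces. The orthogonality $Q(v,h) = 0$ for $v \in H^1_0$ and $h \in \text{Ker}(J)$ follows formally from integration by parts: $Q(v,h) = -\int_\Sigma (Jh) v \, d\mu + \int_{\partial\Sigma}(\nabla_\eta h - qh) v \, ds$, and since $Jh = 0$ and $v$ vanishes on $\partial\Sigma$, both terms die. However, making this rigorous on $H^1$ rather than on smooth functions requires the density argument of Lemma \ref{vtoH} and care that the $J$-harmonic extension is well-defined and bounded, which presumes $0$ is not a Dirichlet eigenvalue or that one works modulo the finite-dimensional kernel. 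The subtle counting point is that an eigenvalue exactly at $\delta_k = 0$ (Dirichlet) or $\mu_k = 1$ (Steklov) lands in $\text{Ker}(\mathbf{Q})$; the theorem's use of \emph{non-positive} for (a) and \emph{smaller than} $1$ for (b) reflects a deliberate asymmetric convention, and verifying that these conventions are mutually consistent — so that no kernel direction is double-counted or omitted — is the delicate part. I would resolve this by invoking the Remark that $\text{Ker}(\mathbf{Q})$ is simultaneously the zero-eigenspace of \eqref{Robin}, the $\delta = 0$ Dirichlet eigenspace, and the $\mu = 1$ Steklov eigenspace, and then arguing directly that the strictly-negative directions of $Q$ are parametrized exactly by $\{\delta_k < 0\} \cup \{\mu_k < 1\}$ with the harmonic-extension map giving the claimed bijection.
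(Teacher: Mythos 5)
Your strategy --- decompose $H^1(\Sigma)$ $Q$-orthogonally as $H^1_0(\Sigma)\oplus\mathrm{Ker}(J)$ and add the two indices --- is genuinely different from the paper's, which instead applies the abstract constraint theorem (Theorem \ref{severalfunc}) to the $a+b$ functionals $\mathcal{Q}u_i$, $\mathcal{Q}h_j$ and then shows that the constrained index vanishes by exactly the $v=u+h$ splitting you describe. Your route does work, and is arguably cleaner, precisely when $0$ is \emph{not} a Dirichlet eigenvalue of \eqref{FBP}: the $J$-harmonic extension is then a bounded right inverse of the trace, the splitting is a genuine $Q$-orthogonal direct sum, the index is additive over such a sum by the uniqueness in Theorem \ref{funddecom}, and ``non-positive'' coincides with ``negative'' in the count $a$.

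The gap is the case where $0$ \emph{is} a Dirichlet eigenvalue, and your proposed fix for it is not correct. First, the splitting itself fails there: $H^1_0(\Sigma)\cap\mathrm{Ker}(J)$ contains the zero-Dirichlet eigenfunctions, and the trace map on $\mathrm{Ker}(J)$ is no longer onto, so $H^1_0(\Sigma)+\mathrm{Ker}(J)\neq H^1(\Sigma)$. Second, and more seriously, you assert that a $\delta=0$ Dirichlet eigenfunction ``lands in $\mathrm{Ker}(\mathbf{Q})$'' and that the negative directions are parametrized by $\{\delta_k<0\}\cup\{\mu_k<1\}$. That is false: for such a $w$ one has $Q(w,v)=\int_{\partial\Sigma}(\nabla_\eta w)v\,ds$, which is nonzero for suitable $v\in H^1(\Sigma)$ since $\nabla_\eta w\not\equiv 0$ by unique continuation; so $w$ is isotropic but \emph{not} in the radical of $Q$ on $H^1(\Sigma)$, and each such mode contributes $+1$ to $\mathrm{MI}(Q)$ on the full space. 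This is exactly why the theorem counts \emph{non-positive} rather than negative Dirichlet eigenvalues, and note that the paper's Remark identifies $\mathrm{Ker}(\mathbf{Q})$ with the $\mu=1$ Steklov eigenspace only, not with the $\delta=0$ Dirichlet eigenspace. The paper absorbs this case through Proposition \ref{phiuzero} (a constraint $\phi=\mathcal{S}u\neq\vec{0}$ with $\phi(u)=0$ still lowers the index by one, whence the ``non-positive'' in the count $c$ of Theorem \ref{severalfunc}) and through the Fredholm-alternative solvability of the extension problem for traces of functions that already satisfy the constraints $\int_{\partial\Sigma}(\nabla_\eta u_i)v\,ds=0$. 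To salvage your decomposition argument you would need a separate accounting of these isotropic-but-not-radical directions, at which point you would essentially be re-proving Proposition \ref{phiuzero}.
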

We give a proof based on the theory we just developed.
\begin{proof}
	By Lemma \ref{vtoH}, it suffices to consider the index form $Q(\cdot, \cdot)$ on the Hilbert space $H^1(\Sigma)$. Let $\mathcal{Q}$ be the associated operator from that Hilbert space to its continuous dual. 
	
	Let $u_1,...u_a$ be a maximal set of independent orthonormal eigenfunctions of (\ref{FBP}) with non-positive eigenvalues. Let $h_1,...,h_b$ be a maximal set of independent orthonormal eigenfunctions of (\ref{Jacobi-Steklov}) with eigenvalues smaller than $1$. Also, without loss of generality, we can assume that  $u_1,...u_a$ are $L^2(d\mu)$ mutually perpendicular to each others and $h_1,...,h_b$ are mutually $L^2(q ds)$ mutually perpendicular to each others.  
	Let 
	\begin{align*}
	\phi_i &=\mathcal{Q}u_i,\\
	\varphi_j &=\mathcal{Q}h_j.
	\end{align*}
	
	One readily checks that $\phi_1,...,\phi_a$, $\varphi_1,... \varphi_b$ are linearly independent and mutually $Q$-perpendicular to each other. Then, it follows from Theorem \ref{severalfunc} that
	$$\text{MI}^{\phi_1,...\phi_a,\varphi_1,...\varphi_b}(Q)=\text{MI}(Q)-a-b.$$
	So the rest of the proof is to prove  $\text{MI}^{\phi_1,...\phi_a,\varphi_1,...\varphi_b}(Q)=0$. 
	
	Indeed  let $v\in \cap_{i=1}^a\text{Ker}(\phi_i)  \cap\cap_{j=1}^b\text{Ker}(\varphi_j)$. We have
	\begin{align*}
	0 &=\phi_i(v)\\
	&=\mathcal{Q}(u_i)v\\
	&=Q(u_i, v)\\
	&=\int_{\Sigma}\lambda_i u_i v d\mu+\int_{\partial\Sigma}(\nabla_\eta u_i) v ds.
	\end{align*}
	
	\textbf{Claim:} There is a function $h\in H^1(\Sigma)$ such that 
	\begin{align*}
	Jh &=0 \text{ on } \Sigma,\\
	h &= v \text{ on } \partial \Sigma. 
	\end{align*}
	
	\textbf{Proof of the claim.} If $0$ is not an eigenvalue of (\ref{FBP}), then the associated homogeneous system to one above has no solution. Then the claim follows from the Fredholm alternative \cite{evans10}. 
	
	If $0$ is an eigenvalue of (\ref{FBP}), then by the calculation above, $\lambda_i=0$, 
	\[\int_{\partial\Sigma}(\nabla_\eta u_i) v ds=0.\]
	By a variation of the Fredholm alternative \cite[Lemma 2.5]{tran16index}, the claim also follows. 
 	 
	Thus, the claim holds and let $u=v-h$. Then 
	\begin{align*}
	u &=0 \text{ on } \partial \Sigma. 
	\end{align*}
	Furthermore,
	\begin{align*}
	\phi_i(u) &=\phi_i(v)-\phi_i(h)\\
	&=-\phi_i(h)\\
	&=-Q(u_i,h)\\
	&= \int_{\Sigma} u_i Jh d\mu-\int_{\partial\Sigma}(\nabla_\eta h-qh) u_i ds\\
	&=0.
	\end{align*}
	Thus, $u$ is $L^2(d\mu)$ perpendicular to each $u_i$ $i=1,...a$. 
	
	Similarly, 
	\begin{align*}
	\varphi_j(h) &=\varphi_j(v)-\varphi_i(u)\\
	&=-\varphi_i(u)\\
	&=-Q(u,h_j)\\
	&= \int_{\Sigma} u Jh_j d\mu-\int_{\partial\Sigma}(\nabla_\eta h_j-qh_j) u ds\\
	&=0.
	\end{align*}
	On the other hand, 
	\begin{align*}
	\varphi_j(h) &=Q(h_j, h)\\
	&=\int_{\partial\Sigma}(\nabla_\eta h_j-qh_j) h \\
	&= (\mu_j-1)\int_{\partial\Sigma}q h_j h.
	\end{align*}
	Thus, $u$ is $L^2(qds)$ perpendicular to each $h_j$ $j=1,...b$. By the variational characterization of (\ref{FBP} and (\ref{Jacobi-Steklov}) and $Q(u,h)=0$, 
	\[Q(u+h,u+h)=Q(u,u)+Q(h,h)\geq 0. \]
	Therefore,  $\text{MI}^{\phi_1,...\phi_a,\varphi_1,...\varphi_b}(S)=0$ and the proof is finished. 
	
\end{proof}

\bibliographystyle{plain}
\bibliography{bioMorse}

\def\cprime{$'$}
\begin{thebibliography}{10}

\bibitem{AEKS14}
W.~Arendt, A.~F.~M. ter Elst, J.~B. Kennedy, and M.~Sauter.
\newblock The {D}irichlet-to-{N}eumann operator via hidden compactness.
\newblock {\em J. Funct. Anal.}, 266(3):1757--1786, 2014.

\bibitem{AM12}
Wolfgang Arendt and Rafe Mazzeo.
\newblock Friedlander's eigenvalue inequalities and the
  {D}irichlet-to-{N}eumann semigroup.
\newblock {\em Commun. Pure Appl. Anal.}, 11(6):2201--2212, 2012.

\bibitem{BCJ88}
J.~Lucas Barbosa, Manfredo do~Carmo, and Jost Eschenburg.
\newblock Stability of hypersurfaces of constant mean curvature in {R}iemannian
  manifolds.
\newblock {\em Math. Z.}, 197(1):123--138, 1988.

\bibitem{BB00}
Lucas Barbosa and Pierre B\'{e}rard.
\newblock Eigenvalue and ``twisted'' eigenvalue problems, applications to {CMC}
  surfaces.
\newblock {\em J. Math. Pures Appl. (9)}, 79(5):427--450, 2000.

\bibitem{BS79}
J\"{u}rgen Bokowski and Emanuel Sperner, Jr.
\newblock Zerlegung konvexer {K}\"{o}rper durch minimale {T}rennfl\"{a}chen.
\newblock {\em J. Reine Angew. Math.}, 311(312):80--100, 1979.

\bibitem{conwaybook90}
John~B. Conway.
\newblock {\em A course in functional analysis}, volume~96 of {\em Graduate
  Texts in Mathematics}.
\newblock Springer-Verlag, New York, second edition, 1990.

\bibitem{edwards64}
H.~M. Edwards.
\newblock A generalized {S}turm theorem.
\newblock {\em Ann. of Math. (2)}, 80:22--57, 1964.

\bibitem{evans10}
Lawrence~C. Evans.
\newblock {\em Partial differential equations}, volume~19 of {\em Graduate
  Studies in Mathematics}.
\newblock American Mathematical Society, Providence, RI, second edition, 2010.

\bibitem{FS11}
Ailana Fraser and Richard Schoen.
\newblock The first {S}teklov eigenvalue, conformal geometry, and minimal
  surfaces.
\newblock {\em Adv. Math.}, 226(5):4011--4030, 2011.

\bibitem{FS16}
Ailana Fraser and Richard Schoen.
\newblock Sharp eigenvalue bounds and minimal surfaces in the ball.
\newblock {\em Invent. Math.}, 203(3):823--890, 2016.

\bibitem{hestenes51}
Magnus~R. Hestenes.
\newblock Applications of the theory of quadratic forms in {H}ilbert space to
  the calculus of variations.
\newblock {\em Pacific J. Math.}, 1:525--581, 1951.

\bibitem{HMRR02}
Michael Hutchings, Frank Morgan, Manuel Ritor\'{e}, and Antonio Ros.
\newblock Proof of the double bubble conjecture.
\newblock {\em Ann. of Math. (2)}, 155(2):459--489, 2002.

\bibitem{koise01}
Miyuki Koiso.
\newblock Deformation and stability of surfaces with constant mean curvature.
\newblock {\em Tohoku Math. J. (2)}, 54(1):145--159, 2002.

\bibitem{KN00}
S.~H. Kulkarni and M.~T. Nair.
\newblock A characterization of closed range operators.
\newblock {\em Indian J. Pure Appl. Math.}, 31(4):353--361, 2000.

\bibitem{HC18}
Haizhong Li and Changwei Xiong.
\newblock Stability of capillary hypersurfaces in a {E}uclidean ball.
\newblock {\em Pacific J. Math.}, 297(1):131--146, 2018.

\bibitem{LR89}
Francisco~J. L\'{o}pez and Antonio Ros.
\newblock Complete minimal surfaces with index one and stable constant mean
  curvature surfaces.
\newblock {\em Comment. Math. Helv.}, 64(1):34--43, 1989.

\bibitem{MN_minmax14}
Fernando~C. Marques and Andr{\'e} Neves.
\newblock Min-max theory and the {W}illmore conjecture.
\newblock {\em Ann. of Math. (2)}, 179(2):683--782, 2014.

\bibitem{petrides19}
Romain Petrides.
\newblock Maximizing {S}teklov eigenvalues on surfaces.
\newblock {\em J. Differential Geom.}, 113(1):95--188, 2019.

\bibitem{Simons68}
James Simons.
\newblock Minimal varieties in riemannian manifolds.
\newblock {\em Ann. of Math. (2)}, 88:62--105, 1968.

\bibitem{smale65}
S.~Smale.
\newblock On the {M}orse index theorem.
\newblock {\em J. Math. Mech.}, 14:1049--1055, 1965.

\bibitem{souam19}
Rabah Souam.
\newblock Stable constant mean curvature surfaces with free boundary in slabs.
\newblock {\em The Journal of Geometric Analysis}, pages 1--16, 2019.

\bibitem{tran16index}
Hung Tran.
\newblock Index characterization for free boundary minimal surfaces.
\newblock {\em Comm. Anal. Geom.}, 28(1):189--222, 2020.

\bibitem{uhlenberg73}
K.~Uhlenbeck.
\newblock The {M}orse index theorem in {H}ilbert space.
\newblock {\em J. Differential Geometry}, 8:555--564, 1973.

\bibitem{vogel87}
Thomas~I. Vogel.
\newblock Stability of a liquid drop trapped between two parallel planes.
\newblock {\em SIAM J. Appl. Math.}, 47(3):516--525, 1987.

\bibitem{WX19}
Guofang Wang and Chao Xia.
\newblock Uniqueness of stable capillary hypersurfaces in a ball.
\newblock {\em Math. Ann.}, 374(3-4):1845--1882, 2019.

\end{thebibliography}

\end{document}